\renewenvironment{proof}{{\noindent \rm Proof.}}{\qed \\ \\}
\theoremstyle{plain}
\newtheorem{thm}{Theorem}[section]
\newtheorem{prop}[thm]{Proposition}
\newtheorem{lem}[thm]{Lemma}
\newtheoremstyle{osaka}
  {\topsep}
  {\topsep}
  {\itshape}
  {0pt}
  {\rmfamily}
  {.}
  { }
  {\thmname{#1}\thmnumber{ #2}\thmnote{ #3}}
\theoremstyle{osaka}
\newtheorem{assumption}[thm]{Assumption}
\newtheorem{rmk}[thm]{Remark}
\numberwithin{equation}{section}
\newcommand{\IP}{{\mathbb{P}}}
\newcommand{\IR}{{\mathbb{R}}}
\newcommand{\FF}{{\mathcal{F}}}
\newcommand{\EE}{{\mathcal{E}}}
\newcommand\blfootnote[1]{%
  \begingroup
  \renewcommand\thefootnote{}\footnote{#1}%
  \addtocounter{footnote}{-1}%
  \endgroup
}
\title{\bf On-diagonal Heat Kernel Lower Bound for Strongly Local Symmetric Dirichlet Forms}
\date{\today}
\author{{\bf Shuwen Lou}}
\begin{document}

\maketitle
\begin{abstract}
\noindent This paper studies strongly local symmetric Dirichlet forms on general measure spaces. The underlying space is equipped with the intrinsic metric induced by the Dirichlet form, with respect to which the metric measure space does not necessarily satisfy volume-doubling property. Assuming Nash-type inequality, it is proved in this paper that outside a properly exceptional set, given a pointwise on-diagonal heat kernel upper bound in terms of the volume function, the comparable heat kernel lower bound also holds.  The only assumption made on the volume growth rate   is that it can be bounded by a continuous function satisfying  doubling property, in other words, is not exponential.
\end{abstract}
\noindent

\blfootnote{ AMS 2010 Mathematics Subject Classification: Primary 31C25, 35K08; Secondary 60J35, 60J45.}



\section{Introduction}

 It has been well-known that Dirichlet forms provide an elegant way to characterize Markov processes. Any regular symmetric Dirichlet form admits a symmetric Hunt process (see, for instance, \cite[Theorem 1.5.1, Theorem 3.1.12]{CF}) associated with it. Furthermore, there is a one-to-one correspondence between the family of strongly local regular symmetric Dirichlet forms and the family of diffusion processes with no killing inside. In recent years, Dirichlet form theory has been serving as a powerful tool to construct processes on irregular spaces. For instance, varieties of strong Markov processes with darning have been constructed by Chen and Fukushima in \cite{Chen, CF}, including one-dimensional absorbing Brownian motion, circular Brownian motion, knotted Brownian motion, multidimensional Brownian motion with darning, diffusions on half-lines merging at one point , etc. In my recent joint work \cite{CL1} with Chen, Brownian motion on spaces with varying dimension are characterized in terms of Dirichlet form with darning and therefore have been studied with an emphasis on their two-sided heat kernel behaviors. One of the major difficulties in studying processes constructed on irregular spaces is to describe their behaviors near the singularities.

It therefore becomes natural to ask whether there is any {\it general} method or criterion treating heat kernel bounds that can be applied to 
Dirichlet forms constructed on state spaces that possibly contain singularities thus do not allow any of the typical methods to work, such as parabolic Harnack inequality, Poincar\'{e} inequality, or volume-doubling property? For example, in \cite{CL1}, none of these properties holds for Brownian motion on spaces with varying dimension due to the inhomogeneity at the darning point(s).

The amount of existing literature answering the question above is very limited. 
The established results on heat kernel estimates are mostly under the frameworks of either Laplace-Beltrami operators on Riemannian manifolds (for example, \cite{LSC1, LSC2}), or Dirichlet forms on metric measure spaces
 (for example, \cite{GH, GHL, GT}).  The majority of these existing results require the underlying spaces to satisfy volume-doubling or other regularity conditions. In this paper,  the underlying space is not necessarily equipped with an original metric. Instead, we equip it with the intrinsic metric induced by the Dirichlet form. Without assuming volume-doubling property of the unerlying measure with respect to the intrinsic metric, we give sharp on-diagonal heat kernel lower bound for general strongly local regular symmetric Dirichlet forms.

A similar problem has been answered in \cite{CG} by Coulhon and Grigor'yan, which gives criteria for pointwise on-diagonal two-sided heat kernel bounds associated with Laplace-Beltrami operators on weighted Riemannian manifolds without assuming volume-doubling property. The two-sided bound only depends on the local volume form of the space near the particular point. The key ingredients in their paper are the integral estimations of the heat kernels and their time derivatives established in \cite[Theorem 1.1]{gowri} and \cite[Theorem 1]{G1}. Some analogous properties were earlier proved for the  fundamental solutions to parabolic equations  by  Aronson in \cite{Aronson}. In this paper, we also extend these integral estimates further to strongly local Dirichlet spaces.

Throughout this paper,  $(\EE, \FF)$ is a  Dirichlet form on a real Hilbert space $L^2(E,\mu)$. The underlying space $E$ is a locally compact separable Hausdorff space equipped with a reference measure $\mu$ which is a positive Radon measure with full support on $E$. With the norm $\|u\|_\FF:=(\EE(u, u)+\|u\|_{L^2})^{1/2}$, $\FF$ is also a real Hilbert space.  The Dirichlet form $\EE$ is assumed to be regular, symmetric, and strongly local.  A Dirichlet form $(\EE, \FF)$ is said to be regular if $C_c(E)\cap \FF$ is dense in $(\EE_1, \FF)$ and $(C_c(E),\|\cdot \|_\infty)$. It is symmetric if $\EE(u, v)=\EE(v, u)$ for any $u, v\in \FF$.  It is strongly local means $\EE(u,v)=0$ whenever $u$ is equal to a constant on a neighborhood of the support of $v$. In other words, $\EE$ has no killing or jumping part.

As usual we denote the infinitesimal operator associated with $\EE$ by $\mathcal{L}$.  It follows that the family of $\{P_t=e^{\mathcal{L}t}, \, t\ge 0\}$ is a strongly continuous semigroup on $L^2(E, \mu)$, and that there is a unique symmetric diffusion process $X$ associated with $(\EE, \FF)$ whose transition semigroup is $\{P_t\}_{t\ge 0}$. Furthermore, $X$ can start from every point of $E$ outside a properly exceptional set \footnote{A set $\mathcal{N}\subset E$ is called properly exceptional if it is Borel, $\mu(\mathcal{N})=0$ and $\IP_x(X_t\in \mathcal{N} \text{ for some }t\ge0)=0$ for all $x\in E\setminus \mathcal{N}$ (see \cite[p.134 and Theorem 4.1.1 on p.137]{FOT}).} denoted by $\mathcal{N}$.  A family of functions $\{p(t,x,y)\}_{t\ge 0, x,y\in E\setminus \mathcal{N}}$ is called the heat kernel of $(\EE, \FF)$ if for all $t>0$ and $\mu-$a.e. $y\in E$,
\begin{equation}\label{heat-kernel-definition}
P_tf(y)=\int_E p(t,x, y)f(x)\mu(dx), \quad \text{for every }f\in L^2(E).
\end{equation}

The main result of this paper is the following theorem.
\begin{thm}\label{main-result}             
Let $(\EE, \FF)$ be a strongly local regular symmetric Dirichlet form satisfying Assumption \ref{strong-regularity} and Nash-type inequality \eqref{Nash-inequality-I}. Fix $z\in E\setminus \mathcal{N}$ where $\mathcal{N}$ is a properly exceptional set.  With respect to the intrinsic metric induced  by $(\EE, \FF)$, assume that for all $r>0$, $\mu(B(z,r))\le v(r)$, where  $v(r)$ is a continuous monotonically increasing function satisfying doubling property in the following sense: There exists some $A>0$ such that 
\begin{equation*}\label{doubling}
v(2r)\le Av(r), \quad \text{for all }r>0.
\end{equation*}
Suppose also that for some $C_1>0$, $T\in (0, \infty]$,
\begin{equation*}\label{UPE}
p(t,z,z)\le \frac{C_1}{v(\sqrt{t})}, \quad t\in (0, T).
\end{equation*}
Then there exists $C_2>0$ such that 
\begin{equation*}\label{LE}
p(t,z,z)\ge \frac{C_2}{v(\sqrt{t})}, \quad t\in (0, T).
\end{equation*}
\end{thm}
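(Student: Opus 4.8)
The plan is to deduce the on-diagonal lower bound from the upper bound via a standard trick: bound the contribution of $p(t,z,\cdot)$ outside a large ball, then use the semigroup identity $p(2t,z,z) = \int_E p(t,z,y)^2\,\mu(dy)$ together with $\int_E p(t,z,y)\,\mu(dy) \le 1$ to force a nontrivial mass of $p(t,z,\cdot)$ to sit on a ball whose volume is comparable to $v(\sqrt t)$. The first ingredient I would establish is a Gaussian-type off-diagonal upper bound, or at least an integrated tail estimate of the form
\begin{equation*}
\int_{E\setminus B(z,r)} p(t,z,y)\,\mu(dy) \le C\exp\!\left(-\frac{r^2}{Ct}\right),
\end{equation*}
valid for $t$ in the relevant range. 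This should come from the Nash-type inequality together with the integral estimates on heat kernels and their time derivatives that the excerpt promises to extend to strongly local Dirichlet spaces (the analogues of the Aronson/Grigor'yan/Gushchin results cited). Crucially, this step only needs the pointwise on-diagonal bound at the single point $z$ along the diffusion, not volume-doubling globally, which is exactly why the argument can avoid the usual regularity hypotheses.

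**Exploiting the doubling of $v$.**

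Given the tail bound, choose $r = K\sqrt t$ with $K$ a large constant so that $\int_{E\setminus B(z,K\sqrt t)} p(t,z,y)\,\mu(dy) \le 1/2$, hence $\int_{B(z,K\sqrt t)} p(t,z,y)\,\mu(dy) \ge 1/2$. By Cauchy--Schwarz,
\begin{equation*}
\frac14 \le \left(\int_{B(z,K\sqrt t)} p(t,z,y)\,\mu(dy)\right)^2 \le \mu(B(z,K\sqrt t)) \int_{B(z,K\sqrt t)} p(t,z,y)^2\,\mu(dy) \le \mu(B(z,K\sqrt t))\, p(2t,z,z).
\end{equation*}
Now $\mu(B(z,K\sqrt t)) \le v(K\sqrt t) \le A^{\lceil \log_2 K\rceil} v(\sqrt t)$ by iterating the doubling property of $v$. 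Therefore $p(2t,z,z) \ge c/v(\sqrt t)$, and replacing $2t$ by $t$ (using the doubling of $v$ once more to pass from $v(\sqrt{t/2})$ to $v(\sqrt t)$) yields the claimed bound $p(t,z,z) \ge C_2/v(\sqrt t)$ on the appropriate time interval. One has to be slightly careful near the endpoint $T$ when $T<\infty$, but the estimate is local in $t$ so this is harmless.

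**The main obstacle.**

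The technical heart — and the step I expect to be hardest — is establishing the integrated Gaussian tail bound in the generality of strongly local Dirichlet forms \emph{using only the pointwise on-diagonal bound at $z$} rather than a global on-diagonal bound. The classical Davies--Gaffney / Grigor'yan integrated-maximum-principle argument delivers the $\exp(-r^2/(Ct))$ decay from strong locality alone, but converting that into an estimate with the right prefactor (so that the Cauchy--Schwarz step closes) requires feeding in the on-diagonal upper bound, and one must check that it suffices to have it at the single point $z$ along the heat flow — this is where properly exceptional sets, the Nash inequality, and the extended integral estimates for $p$ and $\partial_t p$ must be combined carefully. A secondary subtlety is ensuring all manipulations hold pointwise for the fixed $z \in E\setminus\mathcal N$ rather than merely $\mu$-a.e., which is handled by working along the process $X$ started at $z$ and invoking the properly exceptional property of $\mathcal N$.
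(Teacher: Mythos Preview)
Your final step --- trap half the mass of $p(t/2,z,\cdot)$ in a ball of radius $K\sqrt t$, apply Cauchy--Schwarz, and absorb $K$ via the doubling of $v$ --- is exactly the paper's concluding argument, so the strategy is right. The difference is in the tail estimate. You propose a direct $L^1$ exit bound $\int_{E\setminus B(z,r)} p(t,z,y)\,\mu(dy)\le C\exp(-r^2/(Ct))$ with constant prefactor, which you flag as the crux but leave unproved. The paper instead works in $L^2$: it extends Grigor'yan's integrated maximum principle to the Dirichlet-form setting (showing $t\mapsto\int p^2(t,z,x)e^{f_R(x)^2/D(t-T)}\mu(dx)$ is nonincreasing for $D\ge 2$), iterates this with the assumed on-diagonal upper bound at $z$ along a dyadic sequence of times to get $\int_{E\setminus B(z,R)} p^2(t,z,x)\,\mu(dx)\le Cv(\sqrt t)^{-1}e^{-cR^2/t}$ and hence $E_D(z,t):=\int p^2(t,z,x) e^{d(z,x)^2/(Dt)}\mu(dx)\le C/v(\sqrt t)$, and finally controls the $L^1$ tail by H\"older, $\bigl(\int_{E\setminus B}p\bigr)^2\le E_D\cdot\int_{E\setminus B}e^{-d^2/(Dt)}\mu$, bounding the last factor by a dyadic sum using the doubling of $v$. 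Two remarks. First, your $L^1$ bound as written needs no $v$-dependent prefactor, so your comment about feeding in the upper bound to get ``the right prefactor'' is slightly off --- that issue arises only in the paper's $L^2$ route, which genuinely uses the hypothesis $p(t,z,z)\le C_1/v(\sqrt t)$ to produce the factor $v(\sqrt t)^{-1}$ in the $L^2$ tail. Second, if a pointwise Gaffney/Takeda exit estimate can indeed be established from strong locality and the intrinsic-metric completeness alone (plausible, and results of this flavor appear e.g.\ in Sturm's papers), your route would be shorter and would render the upper-bound hypothesis superfluous; but you have not supplied that step, and the paper's $L^2$ machinery is what actually carries the proof here.
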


Note that the definition of intrinsic metric is given in \eqref{def-instrinsic-metric}, and Assumption \ref{strong-regularity} is made based on that. The  intrinsic metric is the metric under which two-sided Gaussian-type heat kernel bounds can be characterized by parabolic Harnack inequality or the conjunction of volume-doubling property and Poincar\'{e} inequality. See \cite{St3}. Finally we briefly explain the necessity of imposing Assumption \ref{strong-regularity} and Nash-type inequality. Indeed, Nash-type inequality is a natural assumption to 
ensure that the heat kernel associated with the Dirichlet form exists.  Assumption \ref{strong-regularity}, on the other hand,  guarantees that intrinsic distance functions are non-degenerate and in the local Dirichlet form domain, and that cutoff distance functions (with respect to the intrinsic metric) are in $\FF$. More details will be given in Section \ref{S:2}.  For more delicate discussion on Assumption \ref{strong-regularity} and its variations, one may refer to \cite{Sturm1} and \cite{St3}.

The rest of the paper is organized as follows. In Section \ref{S:2}, we briefly introduce the definitions and some basic properties of the energy measures associated with strongly local regular symmetric Dirichlet forms. Then we give the definition of intrinsic metric induced by  Dirichlet forms. Using Nash-type inequality, we claim the existence of the heat kernel $\{p(t,x,y)\}_{t\ge 0, x,y\in E\setminus \mathcal{N}}$ and establish a rough off-diagonal heat kernel  upper bound which follows from Davies method. This (rough) upper bound does not need to be in the volume form $v(\sqrt{t})^{-1}$ along the diagonal $\{x,y\in E\setminus \mathcal{N}:\, x=y\}$.  Theorem \ref{main-result} will be proved in Section \ref{S:3}.

\section{Preliminary}\label{S:2}

It is known that any strongly local symmetric Dirichlet form $(\EE, \FF)$ can  be written in terms of the energy measure $\Gamma$ as follows:
$$
\EE(u, v)=\int_E d\Gamma(u, v), \quad u, v\in \FF.
$$
where $\Gamma$ is a positive semidefinite symmetric bilinear form on $\FF$ with values being signed Radon measures on $E$, which is also called the energy measure. To be more precise, we first define for every $u\in \FF\cap L^\infty(E)$ and every $\phi\in \FF\cap \mathcal{C}_0(E)$ 
$$
\int_E \phi \,d\Gamma(u, u)=\EE(u, \phi u)-\frac{1}{2}\EE(u^2, \phi). 
$$
The quadratic form $u\mapsto \Gamma(u, u)$ can be extended to $\FF$ using the approximation sequence $u_n:=n\wedge u\vee (-n)$. Recall that local Dirichlet space is defined as
\begin{align}
\FF_{\text{loc}}:=\{&u: \text{ for every relatively compact open set $D$, there exists some $v\in \FF$ such that } \nonumber
\\
&v|_{D}=u,\, \mu-a.e. \}.\label{def-local-Dirichlet-space}
\end{align}
It follows that every $u\in \FF_{\text{loc}}$ admits a $\mu$-version which is quasi-continuous\footnote{A function $f$ is called``$\EE$-quasi-continuous" if for any $\epsilon>0$, there is an open set $O$ with capacity less than $\epsilon$ such that $f|_{E\setminus O}$ is continuous (See \cite[\S 2.3 on p.77, Theorem 3.17 on p.96, and Theorem 3.3.3 on p.107]{CF}).}. Furthermore, the domain of the  map $u\mapsto \Gamma(u, u)$ can be extended to $\FF_{\text{loc}}$ (see \cite[Theorem 4.3.10(ii), p.248-249]{CF}). By polarization, for $u, v\in \FF_{\text{loc}}$,
\begin{equation*}
\Gamma(u, v):=\frac{1}{4}\left(\Gamma(u+v, u+v)-\Gamma(u-v, u-v)\right)
\end{equation*}
is defined as a signed Radon measure. 
The following Cauchy-Schwarz inequality is satisfied by energy measures,  which can be found in \cite[Appendix]{Sturm1}.
\begin{prop}\label{Cauchy-Schwarz}
Let \emph{$u, v\in \FF_{\text{loc}}$}. For $f, g\in L^\infty(E)$ that are quasi-continuous, it holds
\begin{align}
\int_E fgd\Gamma(u, v)&\le \left(\int_E f^2 d\Gamma (u, u)\right)^{\frac{1}{2}}\left(\int_E g^2 d\Gamma (v, v)\right)^{\frac{1}{2}} \le \frac{1}{2}\left(\int_E f^2 d\Gamma(u, u)+\int_E g^2d\Gamma(v, v)\right).\label{Cauchy-Schwarz-energy-measure}
\end{align}
\end{prop}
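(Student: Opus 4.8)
The plan is to deduce the inequality from the ordinary scalar Cauchy--Schwarz inequality taken with respect to a single auxiliary reference measure, after first extracting from the positivity of the energy measure of a single function a \emph{pointwise} positive-semidefiniteness statement for the $2\times 2$ matrix of densities of $\Gamma(u,u)$, $\Gamma(v,v)$, $\Gamma(u,v)$. Before that, one records the standard facts that make the three integrals meaningful: for $u\in\FF_{\text{loc}}$ the measure $\Gamma(u,u)$ is a nonnegative Radon measure charging no set of zero capacity, so any two bounded quasi-continuous $\mu$-versions of $f$ (resp.\ $g$) integrate the same way against $\Gamma(u,u)$, $\Gamma(v,v)$ and $\Gamma(u,v)$; and by restricting all three measures to an increasing exhausting sequence of relatively compact open sets $D_n\uparrow E$ and letting $n\to\infty$ by monotone convergence, it suffices to treat the case where $\Gamma(u,u)$ and $\Gamma(v,v)$ are finite.

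So assume $\Gamma(u,u),\Gamma(v,v)$ finite and set $\nu:=\Gamma(u,u)+\Gamma(v,v)$. From the parallelogram identity $\Gamma(u+v,u+v)+\Gamma(u-v,u-v)=2\Gamma(u,u)+2\Gamma(v,v)$ and the nonnegativity of $\Gamma(u\pm v,u\pm v)$ one gets $0\le\Gamma(u\pm v,u\pm v)\le 2\nu$, hence $\Gamma(u,v)\ll\nu$. Writing $\Gamma(u,u)=\alpha\,d\nu$, $\Gamma(v,v)=\beta\,d\nu$, $\Gamma(u,v)=\gamma\,d\nu$ with $\alpha,\beta\ge0$ and $\gamma$ real, I would next use that for every $a,b\in\IR$ the function $au+bv$ lies in $\FF_{\text{loc}}$ and $\Gamma(au+bv,au+bv)=a^2\Gamma(u,u)+2ab\,\Gamma(u,v)+b^2\Gamma(v,v)\ge0$ as a measure; in density form this says $a^2\alpha+2ab\gamma+b^2\beta\ge0$ $\nu$-a.e. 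Taking the union of the countably many $\nu$-null exceptional sets as $(a,b)$ ranges over $\IQ^2$ and then invoking continuity in $(a,b)$, one concludes that off a single $\nu$-null set the quadratic form $(a,b)\mapsto a^2\alpha+2ab\gamma+b^2\beta$ is nonnegative for all real $(a,b)$, i.e.\ $\gamma^2\le\alpha\beta$ $\nu$-a.e. This pointwise bound is the heart of the argument.

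The rest is a short computation: for bounded quasi-continuous $f,g$, using $fg\gamma\le|f||g|\,|\gamma|\le|f|\sqrt{\alpha}\cdot|g|\sqrt{\beta}$ $\nu$-a.e.\ and then Cauchy--Schwarz for $\nu$,
\[
\int_E fg\,d\Gamma(u,v)=\int_E fg\,\gamma\,d\nu\le\int_E\big(|f|\sqrt{\alpha}\big)\big(|g|\sqrt{\beta}\big)\,d\nu\le\left(\int_E f^2\alpha\,d\nu\right)^{1/2}\left(\int_E g^2\beta\,d\nu\right)^{1/2},
\]
and the right-hand side is exactly $\left(\int_E f^2\,d\Gamma(u,u)\right)^{1/2}\left(\int_E g^2\,d\Gamma(v,v)\right)^{1/2}$, which is the first asserted inequality; the second one follows from the elementary estimate $\sqrt{XY}\le\tfrac12(X+Y)$ applied with $X=\int_E f^2\,d\Gamma(u,u)$ and $Y=\int_E g^2\,d\Gamma(v,v)$. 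The only point requiring care is the passage from the measure-level positivity of $\Gamma(au+bv,au+bv)$ to the simultaneous $\nu$-a.e.\ positivity of the density quadratic form for \emph{all} real $(a,b)$ --- handled by the countable dense family of coefficients --- together with checking that the bilinearity and nonnegativity of $\Gamma$ invoked there are legitimate on $\FF_{\text{loc}}$, which is precisely the extension of $u\mapsto\Gamma(u,u)$ to $\FF_{\text{loc}}$ recalled in the preliminaries; everything else is routine.
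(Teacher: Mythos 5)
Your proof is correct. Note that the paper itself does not prove this proposition at all --- it simply cites the Appendix of Sturm's \emph{Analysis on local Dirichlet spaces I} --- so there is no in-paper argument to compare against; what you have written is essentially the standard proof found in that reference and in Fukushima--Oshima--Takeda. The two genuine ingredients are both present and handled correctly: the absolute continuity $\Gamma(u,v)\ll\nu:=\Gamma(u,u)+\Gamma(v,v)$ via the parallelogram identity, and the upgrade from ``$a^2\alpha+2ab\gamma+b^2\beta\ge0$ $\nu$-a.e.\ for each fixed $(a,b)$'' to a single null set for all real $(a,b)$ via rational coefficients, yielding $\gamma^2\le\alpha\beta$ pointwise. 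One cosmetic remark: the localization to $D_n$ is only needed to justify running the density argument with finite measures; once $\gamma^2\le\alpha\beta$ holds $\nu$-a.e.\ on $E$, the final integral inequality follows on all of $E$ directly from Cauchy--Schwarz for $\nu$ (with the convention that it is vacuous when the right-hand side is infinite), so no monotone-convergence passage on the signed left-hand side is actually required.
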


Energy measures satisfy the following properties called Leibniz rule and chain rule for strongly local Dirichlet spaces. See \cite[Appendix]{Sturm1} or \cite[Chapter 4]{CF}.
\begin{thm}\label{chain-product-rule} Let $(\EE, \FF)$ be a strongly local regular Dirichlet space. The following properties hold:
\begin{description}
\item{\emph{(i)}} For any \emph{$u,v\in \FF_{\text{loc}}\cap L^\infty(E)$} and \emph{$w\in \FF_{\text{loc}}$}, 
$$
d\Gamma(uv, w)=\tilde{u}d\Gamma(v, w)+\tilde{v}d\Gamma(u, w),
$$ where $\tilde{u}$, $\tilde{v}$ are the quasi-continuous versions of $u$ and $v$. 

\item{\emph{(ii)}}  For any $\Phi\in C_b^1(\IR)$ with bounded derivative $\Phi '$. Then \emph{ $u\in \FF_{\text{loc}}$} implies \emph{$\Phi(u)\in \FF_{\text{loc}}$} and
$$
d\Gamma( \Phi(u), v)=\Phi '(u)d\Gamma( u',v),
$$ 
for all \emph{$v\in \FF_{\text{loc}}\cap L^\infty_{\text{loc}}(E)$}.
\end{description}
\end{thm}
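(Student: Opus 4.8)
The plan is to reduce every statement from $\FF_{\text{loc}}$ to the core $\FF\cap L^\infty(E)$ by a localization argument, then to prove the Leibniz rule (i) directly from the definition of $\Gamma$, and finally to deduce the chain rule (ii) for polynomials by induction from (i) and for a general $\Phi\in C_b^1(\IR)$ by polynomial approximation, removing the boundedness of $u$ at the very end by truncation. The localization rests on the fact that energy measures are local: if $u_1,u_2\in\FF_{\text{loc}}$ agree $\mu$-a.e.\ on a (quasi-)open set $D$, then $\mathbf{1}_D\,d\Gamma(u_1,w)=\mathbf{1}_D\,d\Gamma(u_2,w)$ for every $w\in\FF_{\text{loc}}$ (immediate for relatively compact $D$ from \eqref{def-local-Dirichlet-space} and the definition of $\Gamma$, and extended to quasi-open $D$ by fine locality). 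Hence any identity between energy measures proved for elements of $\FF\cap L^\infty(E)$ propagates to $\FF_{\text{loc}}$: given $u\in\FF_{\text{loc}}$ and a relatively compact open $D$, choose $v\in\FF$ with $v=u$ $\mu$-a.e.\ on $D$, prove the identity for $v$, restrict to $D$, and let $D$ exhaust $E$. The same reasoning, together with the facts that constants lie in $\FF_{\text{loc}}$ (by regularity, via Urysohn functions in $C_c(E)\cap\FF$) with $d\Gamma(c,\cdot)=0$ by strong locality, and that a Lipschitz function vanishing at $0$ applied to an element of $\FF$ stays in $\FF$ with energy controlled by the square of its Lipschitz constant, gives $\Phi(u)\in\FF_{\text{loc}}$ whenever $\Phi\in C_b^1(\IR)$ and $u\in\FF_{\text{loc}}$, which is the first assertion of (ii).

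Since each $\Gamma(u,u)$ is a Radon measure of finite total mass $\EE(u,u)$, the defining (polarized) identity
$$\int_E\phi\,d\Gamma(u,v)=\tfrac12\big(\EE(u,\phi v)+\EE(v,\phi u)-\EE(uv,\phi)\big),\qquad u,v\in\FF\cap L^\infty(E),$$
extends from $\phi\in\FF\cap\mathcal{C}_0(E)$ to all bounded quasi-continuous $\phi$ that are $\mu$-versions of elements of $\FF\cap L^\infty(E)$, by $\EE_1$-density and dominated convergence (recall that $\FF\cap L^\infty(E)$ is an algebra, so $\phi u$ is then an admissible test function). Inserting $\phi u$ and its variants into the identities for $\Gamma(u,w)$ and for $\Gamma(u^2,w)$ and rearranging, using only bilinearity and symmetry of $\EE$, yields $d\Gamma(u^2,w)=2\tilde u\,d\Gamma(u,w)$ for $u,w\in\FF\cap L^\infty(E)$; polarizing via $uv=\tfrac12\big((u+v)^2-u^2-v^2\big)$ then gives $d\Gamma(uv,w)=\tilde u\,d\Gamma(v,w)+\tilde v\,d\Gamma(u,w)$, and the localization above (in $u,v$, and then separately in $w$) delivers (i) in full generality. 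A straightforward induction on the degree, applying (i) repeatedly, upgrades this to the chain rule for polynomials without constant term: $d\Gamma(P(u),v)=P'(u)\,d\Gamma(u,v)$ for $u\in\FF_{\text{loc}}\cap L^\infty_{\text{loc}}(E)$ and $v\in\FF_{\text{loc}}$.

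For a general $\Phi\in C_b^1(\IR)$, first take $u\in\FF\cap L^\infty(E)$. On the compact interval $I=[-\|u\|_\infty,\|u\|_\infty]$ choose polynomials $P_n$ with $P_n(0)=0$ such that $P_n\to\Phi-\Phi(0)$ and $P_n'\to\Phi'$ uniformly on $I$ (e.g.\ by integrating a uniform polynomial approximation of $\Phi'$); then $\sup_n\|P_n'\|_{\infty,I}<\infty$. Applying Proposition~\ref{Cauchy-Schwarz} to
$$\EE\big(P_n(u)-P_m(u),\,P_n(u)-P_m(u)\big)=\int_E\big(P_n'(u)-P_m'(u)\big)^2\,d\Gamma(u,u)$$
shows $\{P_n(u)\}$ is $\EE_1$-Cauchy with limit $\Phi(u)-\Phi(0)$ (the $L^2$-convergence being clear since $|\Phi(u)-\Phi(0)|\le\|\Phi'\|_\infty|u|\in L^2$), so $\Phi(u)\in\FF$; a second application of Proposition~\ref{Cauchy-Schwarz}, using $P_n'(u)\to\Phi'(u)$ uniformly and that $|\Gamma(u,v)|$ is finite on compacts, lets us pass to the limit in $d\Gamma(P_n(u),v)=P_n'(u)\,d\Gamma(u,v)$ to obtain $d\Gamma(\Phi(u),v)=\Phi'(u)\,d\Gamma(u,v)$. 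For general $u\in\FF_{\text{loc}}$, apply this (after localization) to the truncations $u_N:=N\wedge u\vee(-N)$: on the quasi-open set $\{|u|<N\}$ one has $u_N=u$ and $\Phi(u_N)=\Phi(u)$, so fine locality gives the identity there, and since these sets exhaust $E$ up to a set of zero capacity, (ii) follows.

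The main obstacle is the passage from polynomials to a general $\Phi$: one must produce polynomial approximants whose \emph{derivatives} also converge uniformly on the relevant range of $u$, and then convert this into convergence of the associated energy measures, which is precisely the role of the Cauchy--Schwarz inequality of Proposition~\ref{Cauchy-Schwarz} (it turns the required control of $\int f^2\,d\Gamma(\cdot,\cdot)$ into control of energies). A secondary, purely bookkeeping difficulty is the careful handling of quasi-continuous versions and of modifications on sets of zero capacity throughout, since all the asserted identities are equalities of (signed) measures that may charge $\mu$-null sets.
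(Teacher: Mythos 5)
The paper does not actually prove this theorem; it is quoted from the literature (Sturm's appendix and Chen--Fukushima, Chapter 4), so there is no in-paper argument to compare against, and your reconstruction follows essentially the route taken in those references: Leibniz rule from the polarized defining identity for $\Gamma$ on the core $\FF\cap L^\infty(E)$, chain rule for polynomials by induction, passage to general $\Phi\in C_b^1(\IR)$ by polynomial approximation with uniform convergence of the derivatives on the range of $u$, and localization/truncation at the end. The outline is correct, but two steps are stated more quickly than they deserve. First, extending the identity $\int_E\phi\,d\Gamma(u,v)=\tfrac12\bigl(\EE(u,\phi v)+\EE(v,\phi u)-\EE(uv,\phi)\bigr)$ from $\phi\in\FF\cap\mathcal{C}_0(E)$ to bounded quasi-continuous $\mu$-versions of elements of $\FF\cap L^\infty(E)$ is not a matter of ``$\EE_1$-density and dominated convergence'' alone: the map $\phi\mapsto\EE(u,\phi u)$ is continuous only along sequences that are simultaneously $\EE_1$-convergent and uniformly bounded in $L^\infty$ (the $\FF$-norm of the product $\phi u$ involves $\|\phi\|_\infty$), so one needs the standard device of uniformly bounded approximating sequences (truncations plus Ces\`{a}ro means of a subsequence). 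Second, the locality of energy measures on \emph{quasi-open} sets, which you use both to localize from $\FF$ to $\FF_{\text{loc}}$ and to remove the boundedness of $u$ via the exhaustion by $\{|\tilde u|<N\}$, is genuinely weaker material than what follows ``immediately'' from \eqref{def-local-Dirichlet-space}: for relatively compact open $D$ it does follow from the definition of $\FF_{\text{loc}}$ and the local character of $\Gamma$, but the fine (quasi-open) version is a separate known consequence of strong locality and must be invoked as such. With those two points supplied or properly referenced, your argument is complete and is, in substance, the proof the paper is implicitly relying on.
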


To introduce heat kernels associated with Dirichlet spaces, we first introduce the  intrinsic metric $d$ on $E$ induced by the energy measure $\Gamma$:
\begin{equation}\label{def-instrinsic-metric}
d(x, y):=\sup\{u(x)-u(y):\, u\in \FF_{\text{loc}}\cap \mathcal{C}(E), \, \Gamma(u, u)\le \mu \text{ on }E\},
\end{equation}
where $\Gamma(u, u)\le \mu$ should be interpreted as follows: The energy measure $\Gamma(u, u)$ is absolutely continuous with respect to the underlying measure $\mu$ with its Radon-Nikodym derivative $d\Gamma(u, u)/d\mu \le 1$ a.e.. In fact, the Radon-Nikodym derivative $d\Gamma(u, u)/d\mu$ should be interpreted as the square of the gradient of $u\in \FF_{\text{loc}}$.

Generally speaking, $d$ is a pseudo metric instead of a metric, which means it may be degenerate ($d(x,y)=0$ or $\infty$ for $x\neq y$). To ensure $d$ is a metric and all cutoff distance functions are in $\FF$, we make the following fundamental assumption throughout this paper:
\begin{assumption}\label{strong-regularity}
The topology induced by $d(\cdot, \cdot)$ in \eqref{def-instrinsic-metric}  coincides with the original one and all balls in the form of $B_r(z):=\{x\in E: d(x,z)<r\}$ are relatively compact.
\end{assumption}
This assumption in particular implies that $d$ is non-degenerate. The following fundamental lemma has been proved in \cite[Lemma 1]{Sturm1} for strongly local regular Dirichlet spaces satisfying Assumption \ref{strong-regularity}.
\begin{lem}\label{property-energy-meas}
Let $(\EE, \FF)$ be a strongly local regular symmetric Dirichlet form satisfying Assumption \ref{strong-regularity}. For every $z\in E$, the distance function $d_z(x): \, x\mapsto d(x,z)$ is in \emph{$\FF_{\text{loc}}\cap \mathcal{C}(E)$} and satisfies
\begin{equation}\label{L:2.4}
\Gamma (d_z, d_z)\le \mu.
\end{equation}
\end{lem}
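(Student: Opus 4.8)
\emph{Proof sketch.} The plan is to exhibit $d_z$ as an increasing pointwise limit of functions that are themselves admissible in \eqref{def-instrinsic-metric}, and then to transfer membership in $\FF_{\text{loc}}$ and the bound $\Gamma(\cdot,\cdot)\le\mu$ to the limit. Since adding a constant to a competitor in \eqref{def-instrinsic-metric} alters neither its energy measure nor its membership in $\FF_{\text{loc}}\cap\mathcal{C}(E)$, I would first write
\[
 d_z(x)=\sup\{u(x):\ u\in\mathcal{A}_z\},\qquad \mathcal{A}_z:=\bigl\{u\in\FF_{\text{loc}}\cap\mathcal{C}(E):\ \Gamma(u,u)\le\mu,\ u(z)=0\bigr\}.
\]
Every $u\in\mathcal{A}_z$ satisfies $u(x)-u(y)\le d(x,y)$ for all $x,y\in E$, hence is $1$-Lipschitz with respect to $d$, and $u\le d_z$ on $E$; in particular $d_z$ is itself $1$-Lipschitz for $d$, which by Assumption \ref{strong-regularity} (the metric $d$ induces the original topology) already gives $d_z\in\mathcal{C}(E)$. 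Using separability of $E$, I would pick for each point of a fixed countable dense subset a sequence in $\mathcal{A}_z$ whose values there tend to $d_z$; the pointwise supremum of the resulting countable family $\{v_n\}\subset\mathcal{A}_z$ is again $1$-Lipschitz for $d$ (being a supremum of functions bounded above by $d_z$), hence continuous, and agrees with $d_z$ on a dense set, so $\sup_n v_n=d_z$ on $E$.

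Next I would use that $\mathcal{A}_z$ is stable under finite maxima: for $u,v\in\mathcal{A}_z$ one has $u\vee v\in\FF_{\text{loc}}\cap\mathcal{C}(E)$, $(u\vee v)(z)=0$, and, by strong locality of $\EE$, $d\Gamma(u\vee v,u\vee v)=\mathbf{1}_{\{u\ge v\}}\,d\Gamma(u,u)+\mathbf{1}_{\{u<v\}}\,d\Gamma(v,v)\le d\mu$. Hence $w_n:=v_1\vee\cdots\vee v_n\in\mathcal{A}_z$ and $w_n\uparrow d_z$ pointwise. Fix a relatively compact open set $D$. By Assumption \ref{strong-regularity} the compact set $\overline D$ has finite $d$-diameter, so $\overline D\subset B_R(z)$ for some finite $R$, and $0\le w_n\le d_z\le R$ on $\overline D$. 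Replacing $w_n$ by $(w_n\wedge R)\vee 0$, which lies in $\FF_{\text{loc}}\cap L^\infty(E)$, still has energy measure $\le\mu$, and is unchanged on $D$, I may assume $0\le w_n\le R$ on all of $E$. Using regularity of $(\EE,\FF)$ I would fix a cutoff $\phi\in\FF\cap C_c(E)$ with $0\le\phi\le 1$, $\phi\equiv 1$ on $\overline D$, and $\operatorname{supp}\phi$ contained in a relatively compact open set $D'$; then $w_n\phi\in\FF$, and the Leibniz rule (Theorem \ref{chain-product-rule}(i)), the Cauchy--Schwarz inequality for energy measures (Proposition \ref{Cauchy-Schwarz}) and the bound $\int_{\operatorname{supp}\phi}d\Gamma(w_n,w_n)\le\mu(D')$ show that $(w_n\phi)_n$ is bounded in $(\FF,\EE_1)$.

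From here the argument is routine. Passing to a subsequence, $w_{n_k}\phi\rightharpoonup g$ weakly in $\FF$ with $g\in\FF$; since $w_{n_k}\phi=w_{n_k}\uparrow d_z$ on $D$ with a uniform bound, $w_{n_k}\phi\to d_z$ in $L^2(D)$, and weak $\FF$-convergence implies weak $L^2$-convergence, so $g=d_z$ $\mu$-a.e.\ on $D$. As $D$ ranges over all relatively compact open sets this shows $d_z\in\FF_{\text{loc}}$. For the energy estimate, fix $\psi\in C_c(D)$ with $\psi\ge 0$; the functional $u\mapsto\int_E\psi\,d\Gamma(u,u)$ is a nonnegative quadratic form on $\FF$ dominated by $\|\psi\|_\infty\,\EE_1$, hence weakly lower semicontinuous, and using locality of the energy measure ($g=d_z$ and $w_{n_k}\phi=w_{n_k}$ on $D\supset\operatorname{supp}\psi$) together with $\Gamma(w_{n_k},w_{n_k})\le\mu$,
\[
 \int_E\psi\,d\Gamma(d_z,d_z)\ \le\ \liminf_{k\to\infty}\int_E\psi\,d\Gamma(w_{n_k},w_{n_k})\ \le\ \int_E\psi\,d\mu .
\]
Letting $\psi$ exhaust $D$ and then $D$ exhaust $E$ yields $\Gamma(d_z,d_z)\le\mu$.

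The two genuinely nontrivial steps are the first ones: reducing the supremum in \eqref{def-instrinsic-metric} to a countable monotone family inside $\mathcal{A}_z$ — which rests on the strong-locality identity for $d\Gamma(u\vee v,u\vee v)$ and on the equicontinuity supplied by the $1$-Lipschitz bound — and promoting the $\FF_{\text{loc}}$-approximants to a sequence bounded in $\FF$ by multiplying by a finite-energy cutoff, where regularity of $(\EE,\FF)$ is used in an essential way. Once such a sequence agreeing with $d_z$ on $D$ is available, identifying its limit and transferring the pointwise bound $\Gamma(\cdot,\cdot)\le\mu$ are standard consequences of weak compactness and lower semicontinuity.
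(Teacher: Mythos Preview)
The paper does not give its own proof of this lemma; it simply cites \cite[Lemma~1]{Sturm1}. Your sketch is essentially that standard argument: rewrite $d_z$ as a supremum over the normalized class $\mathcal{A}_z$, use separability together with stability of $\mathcal{A}_z$ under finite maxima (a consequence of strong locality) to reduce to an increasing sequence $w_n\uparrow d_z$, multiply by a $C_c$-cutoff to obtain a bounded sequence in $(\FF,\EE_1)$, and pass to a weak limit, recovering $d_z\in\FF_{\text{loc}}$ and $\Gamma(d_z,d_z)\le\mu$ from weak lower semicontinuity of $u\mapsto\int\psi\,d\Gamma(u,u)$. This is correct and matches the cited approach.

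One small slip: you assert ``$0\le w_n\le d_z\le R$ on $\overline D$'' and then truncate by $(w_n\wedge R)\vee 0$, but nothing so far guarantees $w_n\ge 0$ on $\overline D$ (each $v_i\in\mathcal{A}_z$ is only known to satisfy $|v_i|\le d_z$). The fix is trivial: since the constant $0$ lies in $\mathcal{A}_z$, replace each $v_n$ by $v_n\vee 0$ at the outset (or simply include $0$ among the $v_n$'s); alternatively truncate below at $-R$ rather than $0$, using $|w_n|\le d_z<R$ on $\overline D$. Either way the rest of the argument goes through unchanged.
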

\begin{rmk}\label{R:2.5}
Indeed, Lemma \ref{L:2.4} holds without the assumption that all balls are relatively compact. However, given that all open balls are relatively compact, Lemma \ref{L:2.4} implies that all cutoff functions: $x\mapsto (r-d(x,z))_+$ are in $\FF\cap L^\infty$ and satisfy \eqref{L:2.4}.
\end{rmk}

We also assume throughout this paper that the Dirichlet form $(\EE, \FF)$ satisfies the following Nash-type inequality: There exist some $\gamma>0$,  $\delta\ge 0$ and some $A>0$,
\begin{equation}\label{Nash-inequality-I}
\|f\|_2^{2+4/\gamma}\le A\left(\EE(f, f)+\delta \|f\|_2^2\right)\|f\|_1^{4/\gamma}, \quad \text{for all }f\in \FF.
\end{equation}

The  existence of the heat kernel along with its short time off-diagonal estimate follows immediately from Nash inequality, as the next proposition claims. 
\begin{prop}\label{P:upper-bound-off-diag-Nash}
Let $(\EE, \FF)$ be a strongly local regular symmetric Dirichlet form satisfying  Nash-type inequality \eqref{Nash-inequality-I} with some $\gamma>0, \delta\ge 0$ and $A>0$.  
  \begin{description}
\item{\emph{(i)}} There is a properly exceptional set $\mathcal{N}\subset E$ of $X$ such that there is a positive symmetric kernel $p(t,x,y)$ defined on $(0, \infty)\times (E\setminus \mathcal{N})\times (E\setminus \mathcal{N})$ satisfying \eqref{heat-kernel-definition} and 
\begin{equation*}
p(t+s, x, y)=\int_{E} p(t,x,z)p(s, z, y)\mu(dy), \quad t, s>0, \, x, y\in (E\setminus \mathcal{N}).
\end{equation*}
Additionally, for every $t>0$, $y\in E\setminus \mathcal{N}$, the map $x\mapsto p(t,x,y)$ is quasi-continuous on $E$. 
\item{\emph{(ii)}} There exist $C_1, C_2>0$ such that for every $x\in E\setminus \mathcal{N}$,
\begin{equation}\label{e:2.6}
p(t,x,y)\le \frac{C_1}{t^{\gamma/2}}e^{-C_2d(x,y)^2/t}, \quad t\in (0, 1], \, \mu-a.e. \, y,
\end{equation}
for the same $\gamma$  as in Nash inequality \eqref{Nash-inequality-I}.
\end{description}
\end{prop}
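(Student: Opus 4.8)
The plan is to obtain part (i) from the Nash inequality by the classical ultracontractivity argument followed by the standard Dirichlet-form procedure for upgrading $\mu$-a.e.\ statements to statements valid off a properly exceptional set, and part (ii) by Davies' exponential-perturbation method, the strongly local structure entering only through the Leibniz and chain rules of Theorem~\ref{chain-product-rule} and through the admissibility (Lemma~\ref{property-energy-meas}, Remark~\ref{R:2.5}) of truncated intrinsic-distance functions as test functions with $\Gamma\le\mu$. For (i): by the usual Nash iteration, \eqref{Nash-inequality-I} yields the ultracontractive bound $\|P_t\|_{L^1\to L^\infty}\le C t^{-\gamma/2}e^{\delta t}$ for all $t>0$, and restricting to $t\in(0,1]$ absorbs $e^{\delta t}$ into the constant; this produces a symmetric, jointly measurable kernel with $0\le p(t,x,y)\le C t^{-\gamma/2}$ for $\mu\times\mu$-a.e.\ $(x,y)$, and the semigroup law gives Chapman--Kolmogorov $\mu\times\mu$-a.e. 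Since $P_t$ maps $L^2$ into $\FF\cap L^\infty$, each $P_t f$ has a quasi-continuous version; choosing quasi-continuous versions in each variable of $p(t,x,y):=\int_E p(t/2,x,z)\,p(t/2,z,y)\,\mu(dz)$, with $\mathcal{N}$ the properly exceptional union of the exceptional sets arising over a countable dense set of times, yields (i). This construction is standard and I would invoke it (e.g.\ from \cite{GT}, see also \cite{CF,FOT}) rather than reproduce it.

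For (ii), fix a bounded $\psi\in\FF_{\text{loc}}\cap\mathcal{C}(E)$ with $\Gamma(\psi,\psi)\le\mu$ and $\alpha\in\IR$, and consider the perturbed semigroup $P_t^\psi:=e^{\alpha\psi}P_t e^{-\alpha\psi}$, a strongly continuous (non-symmetric) semigroup on $L^2$ with adjoint $(P_t^\psi)^*=P_t^{-\psi}$. Because $\psi$ is bounded, conjugation by $e^{\pm\alpha\psi}$ is legitimate at the form level, and the Leibniz and chain rules of Theorem~\ref{chain-product-rule} give, for the quadratic form of its generator,
\[
\EE^\psi(f,f)\;=\;\EE(f,f)-\alpha^2\int_E \tilde{f}^{\,2}\,d\Gamma(\psi,\psi)\;\ge\;\EE(f,f)-\alpha^2\|f\|_2^2 .
\]
Thus $\EE^\psi$ satisfies \eqref{Nash-inequality-I} with $\delta$ replaced by $\delta+\alpha^2$, and the contraction bound $\|P_t^\psi\|_{2\to2}\le e^{\alpha^2 t}$ follows from the same inequality. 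Running the Davies/Nash iteration with these two ingredients yields $\|P_t^\psi\|_{2\to\infty}\le C t^{-\gamma/4}e^{c(\delta+\alpha^2)t}$, hence by duality the same bound for $\|P_t^\psi\|_{1\to2}$ (with $-\psi$ in place of $\psi$), and gluing at $t/2$ gives $\|P_t^\psi\|_{1\to\infty}\le C t^{-\gamma/2}e^{c'(\delta+\alpha^2)t}$; for $t\in(0,1]$ this reads $\|P_t^\psi\|_{1\to\infty}\le C_1 t^{-\gamma/2}e^{c'\alpha^2 t}$ with $c'$ independent of $\alpha$ and $\psi$.

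Since the kernel of $P_t^\psi$ is $e^{\alpha\psi(x)}p(t,x,y)e^{-\alpha\psi(y)}$, the last bound gives, for every $\alpha\in\IR$, $t\in(0,1]$, and $\mu\times\mu$-a.e.\ $(x,y)$,
\[
p(t,x,y)\;\le\;C_1 t^{-\gamma/2}\exp\!\Big(\alpha\big(\psi(y)-\psi(x)\big)+c'\alpha^2 t\Big).
\]
Fix $x\in E\setminus\mathcal{N}$ and take $\psi(\cdot)=d(x,\cdot)\wedge R$, which is admissible by Lemma~\ref{property-energy-meas}, Remark~\ref{R:2.5} and the chain rule, and satisfies $\psi(x)=0$ and $\psi(y)=d(x,y)$ whenever $d(x,y)<R$; intersecting the a.e.-exceptional sets over $R$ and $\alpha$ in countable dense sets and then optimizing by $\alpha=-d(x,y)/(2c't)$ gives $p(t,x,y)\le C_1 t^{-\gamma/2}\exp\!\big(-d(x,y)^2/(4c't)\big)$ for $t\in(0,1]$ and $\mu$-a.e.\ $y$, i.e.\ \eqref{e:2.6} with $C_2=1/(4c')$. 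Finally, as $x\mapsto p(t,x,y)$ is quasi-continuous by (i) and $d$ is continuous, this propagates to every $x\in E\setminus\mathcal{N}$ after enlarging $\mathcal{N}$ by a set of zero capacity.

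The step I expect to be the main obstacle is the perturbation bookkeeping underlying the displayed identity for $\EE^\psi$: justifying $e^{\pm\alpha\psi}$-conjugation on a general strongly local Dirichlet space when $\psi$ is only in $\FF_{\text{loc}}$ (which is precisely why boundedness of $\psi$ and Theorem~\ref{chain-product-rule} are needed), together with the careful passage from the resulting $\mu$-a.e.\ operator estimates --- uniformly in the parameters $\alpha$ and $R$ --- down to a single pointwise kernel bound valid off one properly exceptional set.
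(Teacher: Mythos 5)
Your proposal is correct and follows essentially the same route as the paper: Nash inequality $\Rightarrow$ ultracontractivity for part (i), with the exceptional-set and quasi-continuity bookkeeping handled by the standard Dirichlet-form machinery (the paper cites \cite[Theorem 2.1]{CKS} and \cite[Theorem 3.1]{BBCK} here), and Davies' exponential-perturbation method with truncated intrinsic-distance test functions for part (ii). The one presentational difference is that where you re-derive the perturbed-form identity $\EE^\psi(f,f)=\EE(f,f)-\alpha^2\int \tilde f^2\,d\Gamma(\psi,\psi)$ and run the Nash/Davies iteration from scratch, the paper simply checks the pointwise bounds $e^{\mp2\psi_n}\frac{d}{d\mu}\Gamma(e^{\pm\psi_n},e^{\pm\psi_n})\le\alpha^2$ (using Theorem~\ref{chain-product-rule} and Lemma~\ref{property-energy-meas}) and then invokes \cite[Theorem 3.25]{CKS} as a black box, which already contains your iteration and the optimization over $\alpha$; the two are mathematically identical.
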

\begin{proof}
It follows from \cite[Theorem 2.1]{CKS} that for some $c_1>0$, 
\begin{equation*}
\|P_t f\|_\infty \le \frac{c_1 e^{\delta t}}{t^{\gamma /2}} \|f\|_1, \quad f\in L^1(E), \, t>0.
\end{equation*}
Therefore (i) follows immediately from \cite[Theorem 3.1]{BBCK}. The proof to (ii) follows from a standard argument using Davies's method: 
Fix $x_0, y_0\in E\setminus N$, $0<t_0\le 1$. Set a constant $\alpha:=d(y_0,x_0)/4t_0$ and
$\displaystyle{\psi (x):=\alpha \cdot d(x, x_0)}$.
Then we define $\psi_n(x)=\psi(x)\wedge n$. Note that for $\mu$-a.e. $x\in E\setminus \mathcal{N}$, by Theorem \ref{chain-product-rule} and Lemma \ref{property-energy-meas},
\begin{displaymath}
e^{-2\psi_n (x)} \frac{d}{d\mu}\Gamma\left( e^{\psi_n (x)}, e^{\psi_n(x)} \right)=\frac{d}{d\mu}\Gamma \left(\psi_n (x), \psi_n(x)\right) \leq \alpha^2.
\end{displaymath}
Similarly,   
$$ 
e^{2\psi_n (x)} \frac{d}{d\mu}\Gamma\left( e^{-\psi_n (x)}, e^{-\psi_n(x)} \right) \leq \alpha^2.
$$
By \cite[Theorem 3.25]{CKS}, there exists some $c_2>0$ such that for every $x\in E\setminus \mathcal{N}$,
\begin{equation*}
p(t,x,y)\le \frac{c_2e^{2\delta t}}{t^{\gamma/2}}\exp\left(-|\psi(y)-\psi(x)|+2t|\alpha|^2\right), \quad t>0, \, \mu-a.e.\, y.
\end{equation*}
i.e., 
\begin{equation}\label{proveoffdiagUB}
p(t,x,y)\le \frac{c_2e^{2\delta }}{t^{\gamma/2}}\exp\left(-|\psi(y)-\psi(x)|+2t|\alpha|^2\right), \quad 0<t\le 1, \, \mu-a.e.\, y.
\end{equation}
  Taking $t=t_0, x=x_0$
and $y=y_0$ in \eqref{proveoffdiagUB} completes the proof.
\end{proof}

With the above heat kernel upper bound, the following well-known result can be justified using spectral theory. See \cite[Example 4.10]{GH}.

\begin{lem}\label{L:2.7}
Fix $y\in E\setminus \mathcal{N}$. For every $t>0$, the map $x\mapsto p(t,x,y)$ is in domain of the infinitesimal generator $\mathcal{L}$ and satisfies the heat equation
\begin{equation*}
\partial_t p(t,x,y)+\mathcal{L}p(t,x,y)=0,
\end{equation*}
where $\partial_t p(t,x,y)$ is the strong derivative of the map $t\mapsto p(t,x,y)$ in $L^2$. 
\end{lem}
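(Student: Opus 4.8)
The plan is to derive both assertions from the spectral theorem applied to the self-adjoint generator, together with the Chapman--Kolmogorov identity recorded in Proposition \ref{P:upper-bound-off-diag-Nash}(i). Write $-\mathcal L=\int_0^\infty\lambda\,dE_\lambda$ for the spectral resolution of the non-negative self-adjoint operator $-\mathcal L$ on $L^2(E,\mu)$, so that $P_t=\int_0^\infty e^{-t\lambda}\,dE_\lambda$ for $t\ge 0$. Two standard consequences of this representation will do the work. First, for each fixed $s>0$ one has $P_s\bigl(L^2(E,\mu)\bigr)\subset\mathrm{Dom}(\mathcal L)$, since $\int_0^\infty\lambda^2e^{-2s\lambda}\,d\langle E_\lambda f,f\rangle\le\bigl(\sup_{\lambda\ge 0}\lambda^2e^{-2s\lambda}\bigr)\|f\|_2^2<\infty$ for all $f\in L^2$, and on that range $\mathcal L P_s=\int_0^\infty(-\lambda)e^{-s\lambda}\,dE_\lambda$ is a bounded operator. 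Second, for every $f\in L^2(E,\mu)$ the orbit $s\mapsto P_sf$ is strongly $L^2$-differentiable at each $s>0$, with $\frac{d}{ds}P_sf=\mathcal L P_sf$; this is the usual smoothing of the semigroup away from $s=0$, and---unlike the one-sided derivative at $s=0$---it does not require $f\in\mathrm{Dom}(\mathcal L)$.

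To apply these facts I first observe that the relevant sections of the kernel already lie in $L^2$. Fix $y\in E\setminus\mathcal N$. For any $s>0$, symmetry of $p$ together with Chapman--Kolmogorov gives $\|p(s,\cdot,y)\|_2^2=\int_E p(s,y,z)p(s,z,y)\,\mu(dz)=p(2s,y,y)<\infty$, so $p(s,\cdot,y)\in L^2(E,\mu)$; inserting $f=p(s,\cdot,y)$ into \eqref{heat-kernel-definition} and using Chapman--Kolmogorov once more yields, for every $r>0$,
\[
P_r\bigl(p(s,\cdot,y)\bigr)=p(r+s,\cdot,y)\qquad\text{as elements of }L^2(E,\mu).
\]
Now fix $t>0$: choosing $s=r=t/2$ above and invoking the first spectral fact, the function $x\mapsto p(t,x,y)=P_{t/2}\bigl(p(t/2,\cdot,y)\bigr)$ lies in $\mathrm{Dom}(\mathcal L)$, which is the first assertion. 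For the heat equation, fix $t_0>0$ and any $\eps\in(0,t_0)$, and set $g:=p(\eps,\cdot,y)\in L^2(E,\mu)$; then $P_sg=p(s+\eps,\cdot,y)$ for all $s>0$, so $t\mapsto p(t,\cdot,y)=P_{t-\eps}g$ on $(\eps,\infty)$, and the second spectral fact makes this map strongly $L^2$-differentiable there with $\partial_t p(t,\cdot,y)=\mathcal L P_{t-\eps}g=\mathcal L p(t,\cdot,y)$. As $\eps\in(0,t_0)$ and then $t_0>0$ are arbitrary, this holds for every $t>0$, giving the heat equation in the statement.

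I do not anticipate a real obstacle: this is the textbook spectral argument (cf.\ \cite[Example 4.10]{GH}). The only points deserving a written line are the dominated-convergence step differentiating the spectral integral in the second fact---one bounds the multipliers $h^{-1}\bigl(e^{-(s+h)\lambda}-e^{-s\lambda}\bigr)+\lambda e^{-s\lambda}$ uniformly for $|h|\le s/2$ and integrates against $d\langle E_\lambda f,f\rangle$---and the bookkeeping that turns the $\mu$-a.e.\ kernel identities of Proposition \ref{P:upper-bound-off-diag-Nash}(i) into genuine identities in $L^2$, which is precisely what the observation $p(s,\cdot,y)\in L^2$ secures.
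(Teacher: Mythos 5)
Your argument is correct and is precisely the spectral-theoretic justification the paper invokes (it offers no proof beyond citing spectral theory and \cite[Example 4.10]{GH}): writing $p(t,\cdot,y)=P_{t/2}\bigl(p(t/2,\cdot,y)\bigr)$ with $\|p(t/2,\cdot,y)\|_2^2=p(t,y,y)<\infty$ by symmetry and Chapman--Kolmogorov, and then using the smoothing $P_s(L^2)\subset\mathrm{Dom}(\mathcal L)$ and the strong differentiability of $s\mapsto P_s f$ for $s>0$, is exactly the intended route. The only cosmetic point: with the paper's convention $P_t=e^{t\mathcal L}$ your conclusion $\partial_t p=\mathcal L p$ is the right one and is what the paper itself uses later (e.g.\ in Proposition \ref{g-decreasing}), so the ``$+$'' in the displayed equation of the lemma is the paper's sign slip, not a defect of your proof.
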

This immediately yields that the map $x\mapsto p(t,x,y)$ is in $\FF$ because $\mathcal{D(L)}$ is a dense subset of $\FF=\mathcal{D(\sqrt{-L})}$.
\section{Proof of the On-diagonal Heat Kernel Lower Bounds}\label{S:3}

For the rest of the paper, a properly exceptional set $\mathcal{N}$ is always fixed to be the same as in Proposition \ref{P:upper-bound-off-diag-Nash}. To prove Theorem \ref{main-result},  we introduce the following quantity $E_D(z, t)$ for notation convenience. Let $z\in E\setminus \mathcal{N}$ be fixed and $d$ be the intrinsic distance on $E$.  Set
\begin{equation*}\label{def-E(z,t)}
E_D(z,t):=\int_E p^2(t,z,x)\exp\left(\frac{d(z,x)^2}{Dt}\right)\mu(dx).
\end{equation*}
For fixed $z\in E$, $R>0$, we let $f_R(x):=(R-d(z,x))_+$.
By Lemma \ref{property-energy-meas} and Remark \ref{R:2.5}, $f_R$ is in  $\FF\cap L^\infty$. To establish an upper bound for $E_D(z,t)$, we first claim the next three propositions.
\begin{prop}\label{g-decreasing}
Fix  $z\in E\setminus \mathcal{N}$  and $0<T<\infty$. For any $R>0$, $D\ge 2$, the map
\begin{equation*}\label{def-f(t)}
t\mapsto \int_E p^2(t,z,x)e^{f_R(x)^2/D(t-T)}\mu(dx)
\end{equation*}
is non-increasing on  $t\in (0, T)$.
\end{prop}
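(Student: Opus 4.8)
The plan is to differentiate the map $t \mapsto \int_E p^2(t,z,x)\,e^{f_R(x)^2/D(t-T)}\,\mu(dx)$ in $t$ and show the derivative is $\le 0$ on $(0,T)$. Write $\xi(t,x) := f_R(x)^2/\big(D(t-T)\big)$, so that on $(0,T)$ we have $t - T < 0$ and hence $\xi \le 0$, with $\partial_t \xi = -f_R(x)^2/\big(D(t-T)^2\big) \le 0$. Abbreviate $p = p(t,z,x)$. Differentiating under the integral sign (justified below), and using Lemma 2.6, namely $\partial_t p = -\mathcal{L}p = \mathcal{L}p$ read through the form as $\EE(p,\cdot)$, the derivative splits into two pieces:
\begin{equation*}
\frac{d}{dt}\int_E p^2 e^{\xi}\,d\mu = \int_E 2p\,(\partial_t p)\,e^{\xi}\,d\mu + \int_E p^2 (\partial_t \xi)\,e^{\xi}\,d\mu.
\end{equation*}
The second term is manifestly $\le 0$ since $\partial_t\xi \le 0$. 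For the first term, I would use the fact that $x \mapsto p(t,z,x)$ lies in $\FF$ (noted right after Lemma 2.6) together with $\partial_t p = \mathcal{L}p$ in the $L^2$ sense, so that $\int_E 2p\,(\partial_t p)\,e^{\xi}\,d\mu = 2\langle \mathcal{L}p, p\,e^{\xi}\rangle = -2\EE(p, p\,e^{\xi})$, provided $p\,e^{\xi} \in \FF$; since $e^{\xi}$ is bounded (as $\xi \le 0$) with bounded gradient when $f_R$ has bounded energy, and $f_R \in \FF \cap L^\infty$ by Remark 2.5, the product $p\,e^{\xi}$ is in $\FF_{\text{loc}} \cap L^\infty$ and in fact in $\FF$.

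Next I would expand $\EE(p, p e^\xi) = \int_E d\Gamma(p, p e^\xi)$ using the Leibniz rule (Theorem 2.3(i)) and the chain rule (Theorem 2.3(ii)) applied to $\Phi(s) = e^s$ with $s = \xi$: this gives
\begin{equation*}
d\Gamma(p, p e^{\xi}) = e^{\xi}\,d\Gamma(p,p) + p\,e^{\xi}\,d\Gamma(p,\xi).
\end{equation*}
Then, writing $d\Gamma(p,\xi)$ and bounding via the Cauchy–Schwarz inequality (Proposition 2.2) with a parameter, I would split $p\,e^{\xi}\,d\Gamma(p,\xi)$ so that it is absorbed into $-e^{\xi}\,d\Gamma(p,p)$ minus a term controlled by $p^2 e^{\xi}\,d\Gamma(\xi,\xi)$. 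Concretely, using the numerical inequality $|ab| \le \tfrac12 a^2 + \tfrac12 b^2$ inside the integral, one gets
\begin{equation*}
-2\EE(p, p e^{\xi}) \le -2\int_E e^{\xi}\,d\Gamma(p,p) + \int_E e^{\xi}\,d\Gamma(p,p) + \int_E p^2\,e^{\xi}\,d\Gamma(\xi,\xi) = -\int_E e^{\xi}\,d\Gamma(p,p) + \int_E p^2 e^{\xi}\,d\Gamma(\xi,\xi).
\end{equation*}
Now $d\Gamma(\xi,\xi) = \big(\partial_s \Phi\big)^2\big|_{s=\xi}\cdot(\text{grad }\xi)^2\,d\mu$, and by the chain rule $\xi = f_R^2/(D(t-T))$ so that $d\Gamma(\xi,\xi) = \big(2f_R/(D(t-T))\big)^2\,d\Gamma(f_R,f_R) \le \big(2f_R/(D(t-T))\big)^2\,\mu$ since $\Gamma(f_R,f_R)\le\mu$ (Lemma 2.4 / Remark 2.5). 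So $\int_E p^2 e^\xi\,d\Gamma(\xi,\xi) \le \int_E p^2 e^\xi\,\frac{4 f_R^2}{D^2(t-T)^2}\,d\mu$, which I then want to dominate by $-\int_E p^2 (\partial_t\xi)e^\xi\,d\mu = \int_E p^2 e^\xi \frac{f_R^2}{D(t-T)^2}\,d\mu$ — this works precisely when $4/D^2 \le 1/D$, i.e. $D \ge 4$. To cover $D \ge 2$ I would keep a fraction of the second ("good") term $\int_E p^2(\partial_t\xi)e^\xi\,d\mu$ in reserve and split the bound on $d\Gamma(\xi,\xi)$ more carefully; alternatively, since $d\Gamma(p,p) \ge 0$ the term $-\int_E e^\xi\,d\Gamma(p,p)$ can simply be discarded, and one only needs $\int_E p^2 e^\xi\,d\Gamma(\xi,\xi) + \int_E p^2(\partial_t\xi)e^\xi\,d\mu \le 0$. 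Plugging in, this requires $\frac{4f_R^2}{D^2(t-T)^2} - \frac{f_R^2}{D(t-T)^2} \le 0$, i.e. $D \ge 4$; to handle $2 \le D < 4$ one instead retains the $-e^\xi\,d\Gamma(p,p)$ term and does not discard it, using a sharper splitting in Cauchy–Schwarz (weight $\theta$ instead of $1$) so that the coefficient in front of $p^2 e^\xi\,d\Gamma(\xi,\xi)$ becomes $1/\theta$ with $\theta$ chosen near $1$; since $f_R$ is bounded and $d\Gamma(p,p)$ integrable this is legitimate. Assembling the two pieces gives $\frac{d}{dt}\int_E p^2 e^\xi\,d\mu \le 0$.

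The main obstacle I anticipate is the justification of differentiation under the integral sign and, relatedly, verifying that $p\,e^\xi$ is a legitimate test function in $\FF$ so that the identity $\langle \mathcal{L}p, p e^\xi\rangle = -\EE(p, p e^\xi)$ is valid — this needs the $L^2$-differentiability of $t\mapsto p(t,z,\cdot)$ from Lemma 2.6, the membership $p(t,z,\cdot)\in\FF$, the boundedness of $e^\xi$ and of $f_R$, and a dominated-convergence argument near the endpoints $t\to 0$ and $t\to T$. A secondary technical point is the exceptional-set bookkeeping: the energy-measure identities hold $\mu$-a.e. and the chain/Leibniz rules require quasi-continuous versions, but since $z\in E\setminus\mathcal{N}$ is fixed and $x\mapsto p(t,z,x)$ is quasi-continuous by Proposition 2.7(i), this causes no real difficulty. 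I would present the computation formally first (the algebraic manipulation with $\Gamma$), then append the regularity justification.
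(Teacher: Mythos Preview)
Your approach is essentially the paper's: differentiate in $t$, rewrite the $\partial_t p$ contribution as $-2\EE(p, pe^\xi)$, expand via the Leibniz and chain rules, and control the cross term by Cauchy--Schwarz for energy measures together with $\Gamma(f_R,f_R)\le\mu$. The only substantive discrepancy is in how you apply Cauchy--Schwarz. You pass immediately to the additive form $|ab|\le\tfrac12 a^2+\tfrac12 b^2$, which yields $-2\EE(p,pe^\xi)\le -A+B$ with $A=\int e^\xi\,d\Gamma(p,p)$ and $B=\int p^2 e^\xi\,d\Gamma(\xi,\xi)$; combined with the time-derivative term this gives nonpositivity only for $D\ge 4$, as you noticed. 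The paper instead keeps the multiplicative form, obtaining $-2\EE(p,pe^\xi)\le 2\sqrt{A}\sqrt{B}-2A$, and separately bounds the time-derivative term by $-\tfrac{D}{4}B$ via \eqref{243}; then $2\sqrt{A}\sqrt{B}-2A-\tfrac{D}{4}B\le 0$ holds precisely when $D\ge 2$.

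Your proposed repair---a weighted splitting $2|ab|\le\theta a^2+\theta^{-1}b^2$---is the right idea and does recover the full range, but the weight you name (``$\theta$ chosen near $1$'') is incorrect: near $\theta=1$ you are back to $D\ge 4$. Working out the algebra, the total is bounded by $(\theta-2)A+(\tfrac{4}{\theta D}-1)C$ with $C=-\int p^2(\partial_t\xi)e^\xi\,d\mu\ge 0$, and this is $\le 0$ exactly when $4/D\le\theta\le 2$; so the correct choice is $\theta=2$, which is admissible for all $D\ge 2$. With that correction your argument coincides with the paper's. (Also, drop the remark that one can ``simply discard'' $-A$: throwing away a negative term makes the bound worse, not better.)
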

\begin{proof}
In this proof, when there is no confusion, we suppress the subscript $R$ from $f_R$ for notation simplicity. Indeed we  show derivative of the map exists and is always non-positive. For this purpose,  for every $t\in (0, T)$, we write
\begin{align}
&\quad\, \frac{1}{s-t}\left(\int_E p^2(s,z,x)e^{f(x)^2/D(s-T)}\mu(dx)-\int_E p^2(t,z,x)e^{f(x)^2/D(t-T)}\mu(dx) \right)\nonumber
\\
&=\int_E \frac{1}{s-t}\left(p^2(s,z,x)e^{f(x)^2/D(s-T)}-p^2(t,z,x)e^{f(x)^2/D(s-T)}\right)\mu(dx) \nonumber
\\
&+\int_E \frac{1}{s-t}\left(p^2(t,z,x)e^{f(x)^2/D(s-T)}-p^2(t,z,x)e^{f(x)^2/D(t-T)}\right)\mu(dx) . \label{212}
\end{align}
For the first term on the right hand side of \eqref{212}, as $s\rightarrow t$, one has
\begin{align}
&\quad \lim_{s\rightarrow t}\int_E \frac{1}{s-t}\left(p(s,z,x)-p(t,z,x)\right)\left(p(s,z,x)+p(t,z,x)\right)e^{f(x)^2/D(s-T)}\mu(dx) \nonumber
\\
&=\int_E \mathcal{L} p(t,z,x)\cdot 2 p(t,z,x)e^{f(x)^2/D(t-T)}\mu(dx),\label{309}
\end{align}
because $\frac{1}{s-t}\left(p(s,x,z)-p(t,x,z)\right)\rightarrow \mathcal{L}p(t,x,z)$ strongly in $L^2$ in view of Lemma \ref{L:2.7}, $p(s,x,z)+p(t,x,z)\rightarrow 2p(t,x,z)$ also strongly in $L^2$, and $e^{f(x)^2/D(s-T)}\rightarrow e^{f(x)^2/D(t-T)}$ strongly in $L^\infty$.  To take the limit as $s\rightarrow t$ for the second term on the right hand side of \eqref{212}, with the heat kernel upper bound shown in Proposition \ref{P:upper-bound-off-diag-Nash}, it follows immediately from dominate convergence theorem that
\begin{align}
&\quad \lim_{s\rightarrow t}\int_E \frac{1}{s-t}\left(p^2(t,z,x)e^{f(x)^2/D(s-T)}-p^2(t,z,x)e^{f(x)^2/D(t-T)}\right)\mu(dx) \nonumber
\\
&=\int_E p^2(t,z,x)\frac{\partial}{\partial t}e^{f(x)^2/D(t-T)}\mu(dx).\label{310}
\end{align}
Now letting $s\rightarrow t$ in \eqref{212} by replacing the two terms in the last display of \eqref{212} with \eqref{309} and \eqref{310} respectively yields
\begin{align}
&\quad \frac{d}{dt}\int_E p^2(t,z,x)e^{f(x)^2/D(t-T)}\mu(dx) \nonumber
\\
&=\int_E 2p(t,z,x) \left(\mathcal{L}p(t,z,x)\right)e^{f(x)^2/D(t-T)}\mu(dx) +\int_E p^2(t,z,x)\frac{d}{dt}e^{f(x)^2/D(t-T)}\mu(dx)\nonumber
\\
&=-2\EE\left(p(t,z,x),\,p(t,z,x)e^{f(x)^2/D(t-T)} \right)+\int_E p^2(t,z,x)\frac{d}{dt}e^{f(x)^2/D(t-T)}\mu(dx).\label{315}
\end{align}
Note in the last ``$=$" above,  $p(t,z,x)e^{f(x)^2/D(t-T)}$ is in $\FF$ because both $p(t,z,x)$ and $e^{f(x)^2/D(t-T)}$ are in $\FF\cap L^\infty$ (see, for example, \cite[Theorem 1.4.2]{FOT}). To proceed with the computation,  we first rewrite the first term in the last display of \eqref{315} in terms of energy measure as follows:
\begin{align}
&\quad -2 \EE\left(p(t,z,x), p(t,z,x)e^{f(x)^2/D(t-T)}\right) =-2 \int_E d\Gamma\left(p(t,z,x), \,p(t,z,x)e^{f(x)^2/D(t-T)}\right) \nonumber 
\\
&=-2 \int_E p(t,z,x) d\Gamma \left(p(t,z,x), e^{f(x)^2/D(t-T)}\right)-2 \int_E e^{d(x)^2/D(t-T)}d\Gamma(p(t,z,x), p(t,z,x)) \nonumber
\\
&=-2 \int_E p(t,z,x)e^{f(x)^2/D(t-T)}d\Gamma \left(p(t,z,x),\, \frac{f(x)^2}{D(t-T)}\right)  \nonumber 
\\
&\quad -2 \int_E e^{f(x)^2/D(t-T)}d\Gamma \left(p(t,z,x), p(t,z,x)\right) \nonumber
\\
&\le 2\left(\int_E e^{f(x)^2/D(t-T)}d\Gamma \left(p(t,z,x),\,p(t,z,x) \right)\right)^{1/2} \nonumber
\\
&\quad \times \left(\int_E p^2(t,z,x)e^{f(x)^2/D(t-T)}d\Gamma \left(\frac{f(x)^2}{D(t-T)}, \frac{f(x)^2}{D(t-T)} \right)\right)^{1/2} \nonumber 
\\
&\quad -2 \int_E e^{f(x)^2/D(t-T)}d\Gamma \left(p(t,z,x), p(t,z,x)\right).\label{eq:3.4}
\end{align}
where in the third ``$=$" above we use Theorem \ref{chain-product-rule} and the last ``$\le$" is justified by \eqref{Cauchy-Schwarz-energy-measure}.  For the second term in the last display of \eqref{315}, it can first be observed that by Theorem \ref{chain-product-rule} and Lemma \ref{property-energy-meas},
\begin{align}
\frac{d}{d\mu}\Gamma \left(\frac{f(x)^2}{D(t-T)}, \frac{f(x)^2}{D(t-T)}\right)&=\frac{4f(x)^2}{D^2(t-T)^2}\frac{d}{d\mu}\Gamma (f(x), f(x)) \nonumber
\\
&\le \frac{4f(x)^2}{D^2(t-T)^2}=-\frac{4}{D}\frac{d}{dt}\left(\frac{f(x)^2}{D(t-T)}\right). \label{243}
\end{align} 
Consequently,
\begin{align}
&\quad \int_E p^2(t,z,x)\frac{d}{dt}e^{f(x)^2/D(t-T)}\mu(dx) =\int_E p^2(t,z,x)e^{f(x)^2/D(t-T)}\frac{d}{dt}\left(\frac{f(x)^2}{D(t-T)}\right)\mu(dx)\nonumber
\\
&\stackrel{\eqref{243}}{\le}-\frac{D}{4 }\int_E p^2(t,z,x)e^{f(x)^2/D(t-T)}\, d\Gamma\left(\frac{f(x)^2}{D(t-T)},\, \frac{f(x)^2}{D(t-T)}\right). \label{eq:3.6}
\end{align}
Now replacing the two terms in the last display of \eqref{315} with \eqref{eq:3.4} and \eqref{eq:3.6} gives 
\begin{align*}
&\quad \,\frac{d}{dt}\int_E p^2(t,z,x)e^{f(x)^2/D(t-T)}\mu(dx) 
\\
&\le  2\left(\int_E e^{f(x)^2/D(t-T)}d\Gamma \left(p(t,z,x),\,p(t,z,x) \right)\right)^{1/2}
\\
&\quad \times \left(\int_E p^2(t,z,x)e^{f(x)^2/D(t-T)}d\Gamma \left(\frac{f(x)^2}{D(t-T)}, \frac{f(x)^2}{D(t-T)} \right)\right)^{1/2}
\\
&\quad -2\int_E e^{f(x)^2/D(t-T)}d\Gamma \left(p(t,z,x),\,p(t,z,x)\right)
\\
&\quad -\frac{D}{4 }\int_E p^2(t,z,x)e^{f(x)^2/D(t-T)}\, d\Gamma\left(\frac{f(x)^2}{D(t-T)},\, \frac{f(x)^2}{D(t-T)}\right)\le 0,
\end{align*}
which is justified by the second  inequality of \eqref{Cauchy-Schwarz-energy-measure} because $D\ge 2$. 
\end{proof}

The next proposition says the on-diagonal heat kernel is monotonically non-increasing in $t$.

\begin{prop}\label{on-diag-HK-mono-dec}
$p(t,z, z)$ is non-increasing in $t\in (0, \infty)$, for all $z\in E\setminus \mathcal{N}$.
\end{prop}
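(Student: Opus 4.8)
The plan is to derive the monotonicity directly from the semigroup and the spectral representation of the heat kernel, exactly as one does in the classical setting. First I would use the Chapman--Kolmogorov identity from Proposition \ref{P:upper-bound-off-diag-Nash}(i), which for $x=y=z$ and times $t/2, t/2$ reads
\[
p(t,z,z)=\int_E p\!\left(\tfrac{t}{2},z,x\right)p\!\left(\tfrac{t}{2},x,z\right)\mu(dx)=\int_E p\!\left(\tfrac{t}{2},z,x\right)^2\mu(dx)=\bigl\|p\!\left(\tfrac{t}{2},z,\cdot\right)\bigr\|_{L^2}^2,
\]
where symmetry of the kernel is used in the second step. Thus it suffices to show that $t\mapsto \|p(t,z,\cdot)\|_{L^2}^2$ is non-increasing on $(0,\infty)$.

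For the latter, I would invoke Lemma \ref{L:2.7}: for fixed $z\in E\setminus\mathcal{N}$ the map $t\mapsto p(t,z,\cdot)$ is a solution of the heat equation $\partial_t p(t,z,\cdot)=\mathcal{L}\,p(t,z,\cdot)$ in the strong $L^2$ sense, and $p(t,z,\cdot)\in \mathcal{D}(\mathcal{L})\subset \FF$ for every $t>0$. Hence $t\mapsto \|p(t,z,\cdot)\|_{L^2}^2$ is differentiable, and
\[
\frac{d}{dt}\bigl\|p(t,z,\cdot)\bigr\|_{L^2}^2
=2\bigl\langle \partial_t p(t,z,\cdot),\,p(t,z,\cdot)\bigr\rangle_{L^2}
=2\bigl\langle \mathcal{L}p(t,z,\cdot),\,p(t,z,\cdot)\bigr\rangle_{L^2}
=-2\,\EE\bigl(p(t,z,\cdot),\,p(t,z,\cdot)\bigr)\le 0,
\]
since $\EE$ is a non-negative quadratic form. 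This shows $s\mapsto \|p(s,z,\cdot)\|_{L^2}^2$ is non-increasing; taking $s=t/2$ and combining with the displayed identity above gives that $t\mapsto p(t,z,z)$ is non-increasing on $(0,\infty)$.

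The only genuine subtlety — and the step I would flag as needing a little care rather than being a real obstacle — is the passage $\langle\mathcal{L}p,p\rangle=-\EE(p,p)$: this is the defining relation between the generator and the form, valid because $p(t,z,\cdot)\in\mathcal{D}(\mathcal{L})$, so one must cite $p(t,z,\cdot)\in\mathcal{D}(\mathcal{L})$ from Lemma \ref{L:2.7} (not merely membership in $\FF$). Everything else is routine: the interchange of $d/dt$ with the $L^2$ inner product is legitimate because $t\mapsto p(t,z,\cdot)$ is strongly differentiable in $L^2$ by Lemma \ref{L:2.7}, and the Chapman--Kolmogorov identity is already established for all $x,y\in E\setminus\mathcal{N}$ in Proposition \ref{P:upper-bound-off-diag-Nash}(i).
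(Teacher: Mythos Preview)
Your proof is correct and follows essentially the same line as the paper's: both use the Chapman--Kolmogorov identity $p(t,z,z)=\|p(t/2,z,\cdot)\|_{L^2}^2$ and then differentiate in $t$, invoking Lemma~\ref{L:2.7} to convert $\partial_t p$ into $\mathcal{L}p$ and thence into $-\EE(p,p)\le 0$. If anything, your write-up is a bit more explicit about the justification of the differentiation step and the need for $p(t,z,\cdot)\in\mathcal{D}(\mathcal{L})$ than the paper's own proof.
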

\begin{proof} This follows from
\begin{align*}
\frac{d}{dt}p(t,z, z)&=\frac{d}{dt}\int_E  p(t/2, z,x)^2 \mu(dx)
\\
&=2\int_E \mathcal{L} p(t/2,z,x)p(t/2,z,x)\mu (dx)
\\
&=-2\,\EE\left(p(t/2, z,x), p(t/2,z,x)\right) \le 0,
\end{align*}
where the second ``$=$" can be justified in an analogous manner to the proof of Proposition \ref{g-decreasing}, in view of  the fact that the strong $L^2$-derivative $\frac{\partial}{\partial t}p(t, z, x)$ exists on $(0, \infty)$ and equals $\mathcal{L} p(t,z,x)$.
\end{proof}

The following proposition is comparable  to  the integral estimate in \cite[Lemma 3.1]{gowri}.
\begin{prop}\label{upb-integral-psquare}
Fix $z\in E\setminus \mathcal{N}$. Assume that for all $r>0$, $\mu(B(z,r))\le v(r)$, where  $v(r)$ is a continuous monotonically increasing function satisfying doubling property.
Suppose  for some $C_1>0$, $T\in (0, \infty]$,
\begin{equation*}
p(t,z,z)\le \frac{C_1}{v(\sqrt{t})}, \quad t\in (0, T).
\end{equation*}
There exist constants $C_2, C_3>0$ such that 
\begin{equation}\label{ineq-upb-integral-psquare}
\int_{E\setminus B(z, R)} p^2(t, z, x)\mu(dx) \le \frac{C_2}{v(\sqrt{t})} e^{-C_3R^2/t}, \quad \text{for all } t\in (0, T), R>0.
\end{equation}

\end{prop}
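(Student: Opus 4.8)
The plan is to combine the monotonicity result of Proposition~\ref{g-decreasing} with the on-diagonal upper bound and the volume-doubling assumption. First I would fix $R>0$ and $t\in(0,T)$, and choose the auxiliary time parameter carefully: set $T':=2t$ (or more generally $T':=ct$ for a constant $c>1$ to be optimized) so that $t\in(0,T')$, and apply Proposition~\ref{g-decreasing} with the cutoff function $f_R$ and some $D\ge 2$. Monotonicity gives
\begin{equation*}
\int_E p^2(t,z,x)e^{f_R(x)^2/D(t-T')}\mu(dx)\le \int_E p^2(s,z,x)e^{f_R(x)^2/D(s-T')}\mu(dx)
\end{equation*}
for $s\le t$; letting $s\downarrow 0$, the exponential factor tends to $e^{f_R(x)^2/D(0-T')}\le 1$, and the integral $\int_E p^2(s,z,x)\mu(dx)=p(2s,z,z)\to$ its supremum, which by the on-diagonal bound and monotonicity (Proposition~\ref{on-diag-HK-mono-dec}) is controlled. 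Actually the cleaner route is: $\int_E p^2(s,z,x)\,e^{f_R(x)^2/D(s-T')}\mu(dx)\le \int_E p^2(s,z,x)\mu(dx)=p(2s,z,z)$, but as $s\downarrow 0$ this blows up like $1/v(\sqrt{2s})$, so instead I should \emph{not} send $s$ to $0$ but rather pick $s=t$ on one side and compare against a fixed intermediate reference, or — better — keep $s$ fixed at a value comparable to $t$ and absorb the resulting $1/v(\sqrt{s})\asymp 1/v(\sqrt{t})$ using doubling.

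Here is the step I would actually carry out. Apply Proposition~\ref{g-decreasing} on the interval $(0,T')$ with $T'=2t$: the map $s\mapsto \int_E p^2(s,z,x)e^{f_R(x)^2/D(s-2t)}\mu(dx)$ is non-increasing on $(0,2t)$. Evaluating at $s=t$ and bounding below the left side by restricting the integration to $E\setminus B(z,R)$ — where $f_R\equiv 0$, so the exponential factor is $1$ there, which is the wrong direction — I instead restrict to $B(z,R/2)\setminus B(z,R)$... no: the point is that on $E\setminus B(z,R)$ we have $f_R=0$ and the weight is $1$, giving nothing. So the exponential must be used on the \emph{near} region. The correct comparison is the reverse: evaluate the non-increasing map at $s=t$ and at a smaller $s$. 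Take $s\to 0^+$: then $e^{f_R(x)^2/D(s-2t)}\to e^{-f_R(x)^2/2Dt}\le 1$ and $\int_E p^2(s,z,x)\mu(dx)=p(2s,z,z)$. Thus
\begin{equation*}
\int_E p^2(t,z,x)e^{f_R(x)^2/D(t-2t)}\mu(dx)=\int_E p^2(t,z,x)e^{-f_R(x)^2/Dt}\mu(dx)\le \lim_{s\downarrow0}p(2s,z,z),
\end{equation*}
which is still infinite. The resolution is that one should \emph{not} let the heat kernel be evaluated at time $\to 0$; instead, replace $p^2(s,z,x)$ by the Chapman--Kolmogorov identity $\int_E p^2(s,z,x)\mu(dx)=p(2s,z,z)$ and then restrict attention to $s$ bounded away from $0$ — i.e., do not take a limit at all, but instead evaluate the monotone map at the two endpoints $s=t$ and $s=t/2$ of a sub-interval, choosing $T'$ appropriately so that the weight at $s=t/2$ is $\le e^{-cR^2/t}$ while the weight at $s=t$ restricted to $E\setminus B(z,R)$ is... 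The clean choice: put $T'=t$ is not allowed since the map is only on $(0,T')$ open; instead fix the reference time $\tau>t$ and write the weight as $e^{f_R^2/D(s-\tau)}$. On $E\setminus B(z,R)$, $f_R=0$. This genuinely contributes nothing, confirming that the exponential gain in \eqref{ineq-upb-integral-psquare} must come from a different mechanism: namely, iterating Proposition~\ref{g-decreasing} with a \emph{shifted} function, or using $f_{2R}$ in place of $f_R$ so that on $E\setminus B(z,R)$ we still have $f_{2R}(x)=(2R-d(z,x))_+$, which ranges over $[0,R]$ — not bounded below either since it vanishes at $d(z,x)=2R$.

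Let me restate the actual plan more honestly. I expect the proof to proceed exactly as in Grigor'yan's paper \cite[Lemma~3.1]{gowri}: introduce $E_D(z,t)=\int_E p^2(t,z,x)e^{d(z,x)^2/Dt}\mu(dx)$, show via Proposition~\ref{g-decreasing} (applied with $f_R$, then letting $R\to\infty$ by monotone convergence so the cutoff becomes the genuine distance and $T'$ large) that $t\mapsto E_D(z,t)$ is essentially controlled — more precisely, that $\sup_{0<s\le t}E_D(z,s)\cdot(\text{something}) $ is finite and comparable to $p(ct,z,z)$ — and then deduce \eqref{ineq-upb-integral-psquare} by the trivial bound $\int_{E\setminus B(z,R)}p^2(t,z,x)\mu(dx)\le e^{-R^2/Dt}\int_E p^2(t,z,x)e^{d(z,x)^2/Dt}\mu(dx)= e^{-R^2/Dt}E_D(z,t)$. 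So the only real content is bounding $E_D(z,t)$ by $C_2/v(\sqrt t)$. For that I would: (1) use Proposition~\ref{g-decreasing} with reference time $T'$ and cutoff $f_R$, take $R\to\infty$ to get that $t\mapsto \int_E p^2(t,z,x)e^{d(z,x)^2/D(t-T')}\mu(dx)$ is non-increasing on $(0,T')$; (2) evaluate at $s=T'/2$ and compare to the value at general $t\in(T'/2,T')$, or send $T'\to\infty$ after rescaling so the factor $e^{d^2/D(t-T')}\to e^{-0}=1$ in a controlled way — here doubling of $v$ enters to compare $v(\sqrt{T'})$ and $v(\sqrt t)$; (3) at the small-time end, bound $\int_E p^2(s,z,x)\mu(dx)=p(2s,z,z)\le C_1/v(\sqrt{2s})\le A C_1/v(\sqrt s)$ and, crucially, choose the reference time $T'$ as a fixed multiple of $t$ (say $T'=2t$) so that at $s=t$ we directly get $\int_E p^2(t,z,x)e^{-d(z,x)^2/Dt}\mu(dx)\le p(t,z,z)\le C_1/v(\sqrt t)$ — wait, this needs the map non-increasing so evaluating at $s=t<T'=2t$ against $s\downarrow 0$ gives an \emph{upper} bound by $\lim_{s\downarrow 0}$ which diverges. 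So step (3) must instead evaluate the non-increasing map at $s=t$ (smaller) versus $s=3t/2$ (larger, still $<2t$), extracting $E_{D}$-type control with the $e^{-d^2/Ct}$ weight from the \emph{larger} time where $p^2$ integrates to $p(3t,z,z)\le C_1/v(\sqrt{3t})\le A^2 C_1/v(\sqrt t)$. Then \eqref{ineq-upb-integral-psquare} follows by splitting off the exponential as above. The main obstacle, then, is the bookkeeping in steps (2)--(3): choosing the reference time $T'$ and the intermediate evaluation time as the right multiples of $t$ so that (a) the weight $e^{f_R^2/D(s-T')}$ at the evaluation point dominates $e^{cd(z,x)^2/t}$ for $x$ in the range of interest, and (b) the $v(\sqrt{\cdot})$ factors at the different times are all comparable via the doubling constant $A$; once that is set up, everything else is the Cauchy--Schwarz bound already packaged in Proposition~\ref{g-decreasing} together with a single application of Chapman--Kolmogorov and the hypothesized on-diagonal bound.
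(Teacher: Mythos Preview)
Your plan has a genuine gap: the strategy of first bounding $E_D(z,t)=\int_E p^2(t,z,x)e^{d(z,x)^2/Dt}\mu(dx)$ and then reading off \eqref{ineq-upb-integral-psquare} from $\int_{E\setminus B(z,R)}p^2\le e^{-R^2/Dt}E_D(z,t)$ cannot be carried out with the tools you invoke. First, letting $R\to\infty$ in Proposition~\ref{g-decreasing} does \emph{not} produce the weight $e^{d(z,x)^2/D(t-T')}$: the function there is $f_R(x)=(R-d(z,x))_+$, so $f_R(x)^2\to\infty$, not $d(z,x)^2$. Second, since $t<T'$ the exponent in Proposition~\ref{g-decreasing} is always negative, so the weight is $\le 1$ and never dominates $e^{d(z,x)^2/Dt}$; you cannot get a handle on $E_D$ this way. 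Third, the monotonicity runs the wrong direction for your comparison: the map is non-increasing, so comparing $s=t$ to $s=3t/2$ gives a \emph{lower} bound for the integral at time $t$, not the upper bound you need. In fact in the paper the logical order is the reverse of what you propose: Proposition~\ref{upb-integral-psquare} is proved first, and only afterwards is it used (Lemma following it) to bound $E_D(z,t)$.

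The mechanism the paper actually uses is an iteration, and this is the idea you are missing. One applies Proposition~\ref{g-decreasing} with $D=2$ at two times $\tau<t<\tau'$, observes that on $E\setminus B(z,R)$ the weight is exactly $1$, and splits the $\tau$-integral at an intermediate radius $\rho<R$. On $B(z,\rho)$ one has $f_R(x)\ge R-\rho$, so the (negative-exponent) weight is $\le e^{-(R-\rho)^2/2(\tau'-\tau)}$; bounding $\int_{B(z,\rho)}p^2(\tau,z,\cdot)$ by $p(2\tau,z,z)\le p(\tau,z,z)\le C_1/v(\sqrt\tau)$ and letting $\tau'\downarrow t$ yields the recursion
\[
\int_{E\setminus B(z,R)}p^2(t,z,x)\,\mu(dx)\;\le\; \frac{1}{v(\sqrt\tau)}\,e^{-(R-\rho)^2/2(t-\tau)}\;+\;\int_{E\setminus B(z,\rho)}p^2(\tau,z,x)\,\mu(dx).
\]
One then iterates with $t_k=t\,2^{-k}$ and $R_k=\bigl(\tfrac12+\tfrac{1}{k+2}\bigr)R$, telescopes, and uses the doubling of $v$ to replace each $1/v(\sqrt{t_k})$ by $e^{c_1k}/v(\sqrt t)$. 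The series $\sum_k \exp\bigl(c_1k - c\,2^{k}R^2/((k+2)^4 t)\bigr)$ is summable and gives the Gaussian factor. The remaining tail $\int_{E\setminus B(z,R_n)}p^2(t_n,z,x)\,\mu(dx)$ as $n\to\infty$ does not vanish for free: here one needs the off-diagonal Nash bound of Proposition~\ref{P:upper-bound-off-diag-Nash} to show it tends to $0$ as $t_n\downarrow 0$ with $R_n\ge R/2$ bounded away from $0$. Your proposal never reaches this recursion, and also does not identify the role of the Nash estimate in killing the remainder.
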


\begin{proof}
In Proposition \ref{g-decreasing}, taking $D=2$ yields that  for any $0<\tau<t<\tau'<T$ and any $R>0$,
\begin{equation*}
\int_E p^2(t,z,x)e^{f_R(x)^2/2(t-\tau')}\mu(dx) \le \int_E p^2(\tau, z,x)e^{f_R(x)^2/2(\tau-\tau')}\mu(dx).
\end{equation*} 
Rewriting each side above as a sum of two integrals over $B(z, R)$ and $E\setminus B(z, R)$ respectively yields that for $\rho<R$,
\begin{align*}
&\quad\, \int_{E\setminus B(z,R)} p^2(t,z,x)\mu(dx) 
\\
&\le \int_{B(z,R)} p^2(\tau, z,x)e^{(R-d(z,x))^2/2(\tau-\tau')}\mu(dx)+\int_{E\setminus B(z, R)} p^2(\tau, z, x)\mu(dx)
\\
&\le \int_{B(z,\rho)} p^2(\tau, z,x)e^{(R-d(z,x))^2/2(\tau-\tau')}\mu(dx)+\int_{E\setminus B(z, \rho)} p^2(\tau, z, x)\mu(dx).
\end{align*}
We observe that since $\tau<\tau'$, the exponential term involved in the last display is bounded by 
$$
 \exp \left(-\frac{(R-\rho)^2}{2(\tau'-\tau)}\right) \int_{B(z, \rho)} p^2(\tau, x,z)\mu(dx).
$$
Therefore, by letting $\tau'\rightarrow t+$ and using semigroup property, we get
\begin{align*}
&\quad \,\int_{E\setminus B(z,R)} p^2(t,z,x)\mu(dx)
\\
& \le \exp \left(-\frac{(R-\rho)^2}{2(t-\tau)}\right) \int_{B(z, \rho)} p^2(\tau, z,x)\mu(dx) +\int_{E\setminus B(z, \rho)} p^2(\tau, z,x)\mu(dx)
\\
&\le \exp \left(-\frac{(R-\rho)^2}{2(t-\tau)}\right) \int_E p^2(\tau, z,x)\mu(dx) +\int_{E\setminus B(z, \rho)} p^2(\tau, z,x)\mu(dx)
\\
&\le \exp\left(-\frac{(R-\rho)^2}{2(t-\tau)}\right)p(2\tau, z, z)+\int_{E\setminus B(z, \rho)}p^2(\tau, z, x)\mu(dx)
\\
&\le \exp\left(-\frac{(R-\rho)^2}{2(t-\tau)}\right)p(\tau, z, z)+\int_{E\setminus B(z, \rho)}p^2(\tau, z, x)\mu(dx)
\\
&\le \frac{1}{v(\sqrt{\tau})} \exp\left(-\frac{(R-\rho)^2}{2(t-\tau)}\right)+\int_{E\setminus B(z, \rho)}p^2(\tau, z, x)\mu(dx),
\end{align*}
where the second last ``$\le $" is justified by Proposition \ref{on-diag-HK-mono-dec}. Now we consider two decreasing sequences $t_k=t\cdot 2^{-k}$ and $R_k=\left(\frac{1}{2}+\frac{1}{k+2}\right)R$ for  $k=0, 1, \cdots$. Replacing $t, \tau, R, \rho$ with $t_{k-1}, t_k, R_{k-1}, R_k$ gives that for $k\ge 1$,
\begin{align*}
&\quad \int_{E\setminus B(z,R_{k-1})} p^2(t_{k-1},z,x)\mu(dx)
\\
& \le \frac{1}{v(\sqrt{t_k})} \exp\left(-\frac{(R_{k-1}-R_k)^2}{2(t_{k-1}-t_k)}\right)+\int_{E\setminus B(z, R_k)}p^2(t_k, z, x)\mu(dx)
\end{align*}
Summing the above inequality in $k$ from $1$ to $n$ and canceling the common terms from both sides gives
\begin{equation}\label{1223}
\int_{E\setminus B(z,R)} p^2(t,z,x)\mu(dx) \le \sum_{k=1}^n \frac{1}{v(\sqrt{t_k})} \exp\left(-\frac{(R_{k-1}-R_k)^2}{2(t_{k-1}-t_k)}\right)+\int_{E\setminus B(z, R_n)}p^2(t_n, z, x)\mu(dx).
\end{equation}
Observing that $t_n\downarrow 0$ and $R_n\downarrow R/2$, in view of Proposition \ref{P:upper-bound-off-diag-Nash}, we get 
\begin{align}
\lim_{n\rightarrow \infty}\int_{E\setminus B(z, R_n)}p^2(t_n, z, x)\mu(dx) &\le \lim_{n\rightarrow \infty}\int_{E\setminus B(z, R/2)}p^2(t_n, z, x)\mu(dx) \nonumber
\\
& =\int_{E\setminus B(z, R/2)}\lim_{n\rightarrow \infty}p^2(t_n, z, x)\mu(dx)=0. \label{75424}
\end{align}
Hence, letting $n\rightarrow \infty$ in \eqref{1223} shows
\begin{equation*}
\int_{E\setminus B(z,R)} p^2(t,z,x)dx \le \sum_{k=1}^\infty  \frac{1}{v(\sqrt{t_k})} \exp\left(-\frac{(R_{k-1}-R_k)^2}{2(t_{k-1}-t_k)}\right).
\end{equation*}
By the doubling property of the function $v$, it holds for some $c_1>0$ that 
\begin{equation*}
v(\sqrt{t})\le A^{k/2+1} v(\sqrt{t_k}) \le   e^{c_1 k} v(\sqrt{t_k}),
\end{equation*}
where $A$ is the same as in Theorem \ref{main-result}. It follows that 
\begin{align}
\int_{E\setminus B(z, R)}p^2(t, z, x)dx &\le \sum_{k=1}^\infty  \frac{1}{v(\sqrt{t_k})} \exp\left(-\frac{(R_{k-1}-R_k)^2}{2(t_{k-1}-t_k)}\right) \nonumber
\\
&\le  \sum_{k=1}^\infty \frac{e^{c_1k}}{v(\sqrt{t})}\exp \left(-\frac{\left(\frac{1}{k+1}-\frac{1}{k+2}\right)^2R^2}{2t\cdot 2^{-k}}\right)\nonumber
\\
&\le \sum_{k=1}^\infty \frac{1}{v(\sqrt{t})}\exp \left(c_1k-\frac{2^{k-1}R^2}{(k+2)^4t}\right). \label{119}
\end{align}
We select constants $c_2, c_3>0$ such that 
\begin{equation*}
\frac{2^{k-1}}{(k+2)^4} >c_2k+c_3, \quad \text{for all } k\ge 1. 
\end{equation*}
Therefore, when $R^2/t>2c_1/c_2$, the quantity inside the brackets of  the last display of \eqref{119}  can be bounded by
\begin{align*}
c_1k-\frac{2^{k-1}R^2}{(k+2)^4t} &<c_1k-(c_2k+c_3) \frac{R^2}{t }
\\
&< c_1k-c_2k\cdot\frac{2c_1}{c_2}-c_3\frac{R^2}{t}<-c_1k-c_3\frac{R^2}{t}.
\end{align*}
i.e., when $R^2/t>2c_1/c_2$, there exists some $c_4>0$ such that
\begin{equation*}
\int_{E\setminus B(z, R)}p^2(t, z, x)dx \le\sum_{k=1}^\infty \frac{1}{v(\sqrt{t})}\exp\left(-c_1k-c_3\frac{R^2}{t}\right)\le \frac{c_4}{v(\sqrt{t})}e^{-c_3R^2/t}.
\end{equation*}
 On the on the other hand, when $R^2/t\le 2c_1/c_2$, due to its boundedness we immediately conclude that there exist $c_5, c_6>0$ such that
\begin{equation*}
\int_{E\setminus B(z, R)}p^2(t, z, x)dx \le \int_E p^2(t, z, x)dx \le p(2t, z, z)\le p(t,z,z) \le \frac{c_5}{v(\sqrt{t})}\le \frac{c_6}{v(\sqrt{t})}e^{-R^2/t},
\end{equation*}
where the third ``$\le$" from left is due to the monotonicitiy of $p(t,z,z)$ stated in Proposition \ref{on-diag-HK-mono-dec}. The proof is thus complete by combining both cases above.
\end{proof}

We finally establish the following upper bound for $E_D(z, t)$ before proving the main theorem. 
 
\begin{lem}
Fix $z\in E\setminus \mathcal{N}$. Assume that for all $r>0$, $\mu(B(z,r))\le v(r)$, where  $v(r)$ is a continuous monotonically increasing function satisfying doubling property.
Suppose  for some $C_4>0$, $T\in (0, \infty]$,
\begin{equation}\label{UPE-L:2.4}
p(t,z,z)\le \frac{C_4}{v(\sqrt{t})}, \quad t\in (0, T).
\end{equation}
 Then there exist some $C_5>0$ and $D_0>0$ such  that for any $D>D_0$, 
\begin{equation*}\label{upb-E_D}
E_D(z, t)\le \frac{C_5}{v(\sqrt{t})}, \quad t\in (0, T).
\end{equation*}
 \end{lem}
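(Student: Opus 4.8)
The plan is to split the integral defining $E_D(z,t)$ (see \eqref{def-E(z,t)}) over the dyadic annuli $A_k:=B(z,(k+1)\sqrt{t})\setminus B(z,k\sqrt{t})$, $k=0,1,2,\dots$, which exhaust $E$ up to the $p(t,z,\cdot)$-null set $\{x:d(z,x)=\infty\}$, and to estimate the contribution of each $A_k$ separately. On $A_k$ one has $d(z,x)<(k+1)\sqrt{t}$, hence $\exp(d(z,x)^2/(Dt))\le\exp((k+1)^2/D)$, so
\[
E_D(z,t)\;\le\;e^{1/D}\int_E p^2(t,z,x)\,\mu(dx)\;+\;\sum_{k=1}^\infty e^{(k+1)^2/D}\int_{E\setminus B(z,k\sqrt{t})} p^2(t,z,x)\,\mu(dx).
\]

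For the $k=0$ term I would use Chapman--Kolmogorov and the symmetry of the kernel to identify $\int_E p^2(t,z,x)\,\mu(dx)=p(2t,z,z)$, then apply the monotonicity of $s\mapsto p(s,z,z)$ from Proposition~\ref{on-diag-HK-mono-dec} to get $p(2t,z,z)\le p(t,z,z)$, and finally invoke the hypothesis \eqref{UPE-L:2.4}; this bounds the $k=0$ term by $e^{1/D}C_4/v(\sqrt{t})$ for all $t\in(0,T)$. For the tail, the hypothesis \eqref{UPE-L:2.4} is exactly the input required by Proposition~\ref{upb-integral-psquare} (applied with $C_1=C_4$), which furnishes constants $c_1,c_2>0$ with
\[
\int_{E\setminus B(z,k\sqrt{t})} p^2(t,z,x)\,\mu(dx)\;\le\;\frac{c_1}{v(\sqrt{t})}\,e^{-c_2k^2},\qquad t\in(0,T),\ k\ge1.
\]
Substituting this gives $E_D(z,t)\le v(\sqrt{t})^{-1}\bigl(e^{1/D}C_4+c_1\sum_{k\ge1}\exp((k+1)^2/D-c_2k^2)\bigr)$ for all $t\in(0,T)$.

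It then only remains to make the series converge, which is precisely where the threshold $D_0$ enters. Since $(k+1)^2\le 4k^2$ for $k\ge1$, the $k$-th exponent is at most $(4/D-c_2)k^2$, so for any $D>D_0:=4/c_2$ the series $\sum_{k\ge1}\exp((4/D-c_2)k^2)$ is a finite constant depending only on $D$ and $c_2$; taking $C_5:=e^{1/D}C_4+c_1\sum_{k\ge1}\exp((4/D-c_2)k^2)$ yields the claim. I do not anticipate a genuine obstacle: this is a routine annular decomposition fed by the Gaussian-type off-diagonal $L^2$-tail bound already established in Proposition~\ref{upb-integral-psquare}. The one point that needs attention is that the weight $e^{(k+1)^2/D}$ must be dominated by the decay $e^{-c_2k^2}$, which is exactly what forces $D$ to be large and explains why the conclusion is asserted only for $D>D_0$; the bookkeeping items—identifying $\int_E p^2\,d\mu$ with $p(2t,z,z)$ and invoking on-diagonal monotonicity—are immediate from Chapman--Kolmogorov and Proposition~\ref{on-diag-HK-mono-dec}.
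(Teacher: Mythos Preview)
Your argument is correct and follows essentially the same strategy as the paper: an annular decomposition of $E_D(z,t)$, with the inner ball handled via Chapman--Kolmogorov plus Proposition~\ref{on-diag-HK-mono-dec}, and the tail controlled by Proposition~\ref{upb-integral-psquare}. The only differences are cosmetic---the paper uses genuinely dyadic annuli $\{2^kR\le d(z,x)<2^{k+1}R\}$ with $R=\sqrt{Dt}$ (your $A_k$ are linearly spaced, not ``dyadic'') and lands on $D_0=5/C_3$ rather than $4/C_3$---and one small bookkeeping point: your $C_5$ as written depends on $D$, but since $D\mapsto E_D(z,t)$ is decreasing, establishing the bound at any fixed $D_1>D_0$ gives the same constant for all $D\ge D_1$.
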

\begin{proof}
Note that $E_D$ is decreasing in  $D$, therefore it suffices to show the conclusion for some $D>0$. Fix any $D>5/C_3$, where $C_3 $ is the same as in Proposition \ref{upb-integral-psquare}, by choosing $R=\sqrt{Dt}$ we decompose $E_D(z, t)$ as follows:
\begin{align}
\int_E p^2(t, z, x)\exp\left(\frac{d(z, x)^2}{Dt}\right)\mu(dx)
&=\int_{B(z, R)}p^2(t, z, x)\exp\left(\frac{d(z, x)^2}{Dt}\right) \mu(dx) \nonumber
\\
&+\sum_{k=0}^\infty \int_{2^kR\le d(z,x)\le 2^{k+1}R}p^2(t, z, x)\exp\left(\frac{d(z, x)^2}{Dt}\right)\mu(dx). \label{552}
\end{align}
For the first term on the right hand side of \eqref{552}, since $R=\sqrt{Dt}$, it holds from the semigroup property and \eqref{UPE-L:2.4} that for some $c_1>0$,
\begin{align*}
\int_{B(z, R)}p^2(t, z, x)\exp\left(\frac{d(z, x)^2}{Dt}\right)\mu(dx) &\le e^{R^2/Dt}\int_E p^2(t, z, x)\mu(dx) 
\\
&\le e\cdot  p(2t, z, z ) 
\le e\cdot p(t,z,z)\le
\frac{c_1}{v(\sqrt{t})},
\end{align*}
where the second last inequality is again justified by the monotonicity of $p(t,z,z)$ on $t\in (0, \infty)$.
For the summation term in \eqref{552}, observing that $D>5/C_3$ for $C_3$ the same as  in  Proposition \ref{upb-integral-psquare}, we get that there exists some $c_2>0$ such that
\begin{align*}
&\quad \sum_{k=0}^\infty \int_{2^kR\le d(z,x)\le 2^{k+1}R}p^2(t, z, x)\exp\left(\frac{d(z, x)^2}{Dt}\right) \mu(dx)
\\
&\le \sum_{k=0}^\infty \exp \left(\frac{4^{k+1}R^2}{Dt}\right)\int_{d(z,x)\ge 2^{k}R} p^2(t,z,x)\mu(dx) 
\\
&\le \sum_{k=0}^\infty \exp \left(\frac{4^{k+1}R^2}{Dt}\right) \frac{c_2}{v(\sqrt{t})}\exp \left(-\frac{5\cdot 4^kR^2}{Dt}\right)
\\
&\le  \frac{c_2}{v(\sqrt{t})}\sum_{k=0}^\infty \exp \left(-\frac{4^{k}R^2}{Dt}\right)
\\
&=  \frac{c_2}{v(\sqrt{t})}\sum_{k=0}^\infty e^{-4^k}=\frac{c_3}{v(\sqrt{t})}, \quad t\in (0, T).
\end{align*}
Combining the computation for both terms on the right hand side of  \eqref{552}   yields that there exists some $c_4>0$ such that
\begin{equation*}
E_{D}(z, t)\le  \frac{c_4}{v(\sqrt{t})}, \quad \text{for all }t\in (0, T).
\end{equation*}
Therefore the proof is complete by choosing $D_0>5/C_3$ where $C_3$ the same as in Proposition \ref{upb-integral-psquare}.

\end{proof}

To proceed, we introduce another quantity for notation simplicity. For any $D>0$, $R>0$, set 
\begin{equation}\label{def-I(z,R)}
I_D(z, t, R):=\int_{E\setminus B(z,R)}e^{-d(x,z)^2/Dt}\mu(dx).
\end{equation}
It follows from H\"{o}lder's inequality that for any $z\in E\setminus \mathcal{N}$ and any $R>0$,
\begin{align}
&\quad \left(\int_{E\setminus B(z, R)} p(t/2, z, x)\mu(dx)\right)^{2} \nonumber
\\
&\le \int_{ E\setminus B(z, R)} p^2(t/2, z, x)e^{d(x,z)^2/Dt}\mu(dx)\int_{E\setminus B(z, R)} e^{-d(x,z)^2/Dt}\mu(dx) \nonumber
\\
&\le E_D(z, t/2)\int_{E\setminus B(z, R)} e^{-d(x,z)^2/Dt}\mu(dx)=E_D(z, t/2)I_D(z,t, R). \label{213}
\end{align}

Now we are in the position to prove the following main theorem:
\begin{thm}\label{HKLE}
Let $(\EE, \FF)$ be a strongly local regular symmetric Dirichlet form satisfying Assumption  \ref{strong-regularity} and Nash-type inequality \eqref{Nash-inequality-I}. Fix $z\in E\setminus \mathcal{N}$ where $\mathcal{N}$ is a properly exceptional set. Assume that for all $r>0$, $\mu(B(z,r))\le v(r)$, where  $v(r)$ is a continuous monotonically increasing function satisfying doubling property in the following sense: There exists some $A>0$ such that 
\begin{equation*}\label{doubling-P:2.3}
v(2r)\le Av(r), \quad \text{for all }r>0.
\end{equation*}
Suppose  for some $C_6>0$, $T\in (0, \infty]$,
\begin{equation*}
p(t,z,z)\le \frac{C_6}{v(\sqrt{t})}, \quad t\in (0, T).
\end{equation*}
Then there exists $C_7>0$ such that for all $t\in (0,T)$,
\begin{equation*}
p(t, z,z)\ge \frac{C_7}{v(\sqrt{t})}.
\end{equation*}
\end{thm}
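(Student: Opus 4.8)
The plan is to carry out the classical reduction from an on-diagonal lower bound to a lower bound for the heat mass in a ball, and then to split that mass into a main part and a Gaussian tail which is controlled by the estimates already in hand.

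First, from the semigroup identity $p(t,z,z)=\int_E p(t/2,z,x)^2\,\mu(dx)$, the Cauchy--Schwarz inequality, and the volume hypothesis $\mu(B(z,R))\le v(R)$,
\[
p(t,z,z)\ \ge\ \int_{B(z,R)}p(t/2,z,x)^2\,\mu(dx)\ \ge\ \frac{1}{v(R)}\left(\int_{B(z,R)}p(t/2,z,x)\,\mu(dx)\right)^{2}.
\]
So the whole problem reduces to showing that, for a radius of the form $R=\lambda\sqrt t$ with $\lambda$ a constant depending only on the structural data (and in particular not on $t$), the ball mass $\int_{B(z,R)}p(t/2,z,x)\,\mu(dx)$ is bounded below by a fixed positive constant; one further use of the doubling property then replaces $v(\lambda\sqrt t)$ by $C(\lambda)v(\sqrt t)$ and finishes the proof.

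To get that lower bound I would write $\int_{B(z,R)}p(t/2,z,x)\,\mu(dx)=\int_E p(t/2,z,x)\,\mu(dx)-\int_{E\setminus B(z,R)}p(t/2,z,x)\,\mu(dx)$ and treat the two terms separately. For the total mass I would invoke conservativeness of $X$, so that $\int_E p(t/2,z,x)\,\mu(dx)=1$ for every $t$: since $v$ is doubling it grows at most polynomially, hence $\mu(B(z,r))\le v(r)$ forces sub-exponential volume growth, and under Assumption~\ref{strong-regularity} (intrinsic metric complete, balls relatively compact) this yields stochastic completeness by a standard criterion; this is the point at which the hypothesis that the volume growth is not exponential is genuinely used. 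For the tail term I would start from \eqref{213},
\[
\left(\int_{E\setminus B(z,R)}p(t/2,z,x)\,\mu(dx)\right)^{2}\ \le\ E_D(z,t/2)\,I_D(z,t,R),
\]
fix any $D>D_0$ (where $D_0$ is the constant from the lemma above), use $E_D(z,t/2)\le C_5/v(\sqrt{t/2})\le A C_5/v(\sqrt t)$ from that lemma, and bound $I_D(z,t,R)$ in \eqref{def-I(z,R)} by a dyadic annular decomposition: with $\mu(\{2^kR\le d(\cdot,z)<2^{k+1}R\})\le v(2^{k+1}R)\le A^{k+1}v(R)$,
\[
I_D(z,t,R)\ \le\ v(R)\sum_{k\ge 0}A^{k+1}e^{-4^kR^2/(Dt)}\ \le\ c\,v(R)\,e^{-R^2/(Dt)}\qquad (R\ge\sqrt{Dt}),
\]
the series converging because $4^k$ outgrows $A^k$. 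Taking $R=\lambda\sqrt t$ and comparing $v(\lambda\sqrt t)$ with $v(\sqrt{t/2})$ through doubling (a factor polynomial in $\lambda$) gives $\big(\int_{E\setminus B(z,R)}p(t/2,z,x)\,\mu(dx)\big)^{2}\le c'\lambda^{\log_2 A}e^{-\lambda^2/D}$ with $c'$ independent of $t$; since the right-hand side tends to $0$ as $\lambda\to\infty$, I fix $\lambda=\lambda_0\ge\sqrt D$, depending only on $A,D,C_5$, so large that this tail mass is at most $1/2$.

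Combining the two terms, for $R=\lambda_0\sqrt t$ one obtains $\int_{B(z,R)}p(t/2,z,x)\,\mu(dx)\ge 1/2$, hence $p(t,z,z)\ge 1/(4v(\lambda_0\sqrt t))$, and then $v(\lambda_0\sqrt t)\le A^{\lceil\log_2\lambda_0\rceil}v(\sqrt t)$ gives $p(t,z,z)\ge C_7/v(\sqrt t)$ on $(0,T)$ with $C_7=\big(4A^{\lceil\log_2\lambda_0\rceil}\big)^{-1}$. The step I expect to be the main obstacle is the uniform positive lower bound on the total mass $\int_E p(t/2,z,x)\,\mu(dx)$: it is immediate for small $t$, but for $T=\infty$ it genuinely requires conservativeness of the Dirichlet form, and establishing that from the volume bound (rather than simply assuming it) is the only non-routine input; everything else is bookkeeping, the pervasive care being to ensure that each appeal to the doubling property contributes only constants depending on $A$ and on the fixed, $t$-independent choice of $\lambda_0$, so that $C_7$ does not degenerate as $t$ varies over $(0,T)$.
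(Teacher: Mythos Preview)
Your proof follows essentially the same route as the paper's: Cauchy--Schwarz over a ball to reduce to a lower bound on the ball mass, then the split into total mass minus a Gaussian tail controlled via $E_D(z,t/2)\,I_D(z,t,R)$, with $I_D$ bounded by an annular decomposition and doubling, and finally $R=\lambda\sqrt{t}$ with $\lambda$ large. The only differences are cosmetic---your annuli are dyadic ($R_k=2^kR$) while the paper uses $R_k=D^kR$---and your explicit treatment of conservativeness, which the paper invokes silently (it writes $\int_{B(z,R)}p(t/2,z,\cdot)\,d\mu = 1-\int_{E\setminus B(z,R)}p(t/2,z,\cdot)\,d\mu$ without justification); your caution on that point is well placed, and the argument you sketch via Sturm's criterion under Assumption~\ref{strong-regularity} and polynomial volume growth is the correct one.
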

\begin{proof}
Let $\Omega:= B(z, R)$ where $R>0$ will be determined later. $\mu(\Omega)\le v(R)$ by the assumption.  In view of the symmetry and the semigroup property of $p(t,x,y)$, 
\begin{align*}
p(t, z,z)&=\int_E p^2(t/2, z,x)\mu(dx)\ge \int_\Omega p^2(t/2, z, x) \mu(dx) \ge  \frac{1}{\mu(\Omega )}\left( \int_\Omega p(t/2, z, x)\mu(dx)\right)^2
\\
&=\frac{1}{\mu(\Omega )}\left(1- \int_{E\setminus \Omega} p(t/2, z, x)\mu(dx)\right)^2 \ge \frac{1}{v(R)}\left(1- \int_{E\setminus \Omega} p(t/2, z, x)\mu(dx)\right)^2.
\end{align*}
To give an upper bound for \eqref{213}:
\begin{equation}\label{Holder}
\left(\int_{E\setminus B(z, R)} p(t/2, z, x)\mu(dx)\right)^2\le E_D(z,t)I_D(z, t,R), \quad \text{for all }t\in (0, T),
\end{equation}
we first select and fix a constant $D>\max\{D_0, 2\}$ where $D_0$ is  the same as in Lemma \ref{upb-E_D}. By the doubling property of $v(\cdot)$, there exists some constant $B>1$ such that $v(Dr)\le Bv(r)$, for all $r>0$. We thus let $R=a\sqrt{t}$ and $R_k=D^kR$, $k=0, 1, 2, \cdots$, where the constant $a>0$ is chosen to satisfy
\begin{equation}\label{condition-constant-a}
\frac{a^2}{D}\ge 2 \ln B.
\end{equation}
It follows that $v(R_{k+1})\le B^k v(R)$. Observing that $D^{2k}\ge k+1$ for all $k\ge 0$, we  have
\begin{align*}
I_D(z, t, R)&=\sum_{k=0}^\infty \int_{B(z, R_{k+1})\setminus B(z, R_k)}e^{-d(x, z)^2/Dt}\mu(dx)
\\
&\le \sum_{k=0}^\infty \exp \left(-\frac{R_k^2}{Dt}\right)\mu(B(z, R_{k+1}))
\\
&\le \sum_{k=0}^\infty \exp \left(-
\frac{R^2_{k}}{Dt}\right)B^{k}v(R)
\\
&=v(R)\sum_{k=0}^\infty \exp\left(-D^{2k}\;\frac{R^2}{Dt}+k\ln B\right)
&
\\
&\stackrel{\eqref{condition-constant-a}}{\le} v(R)\sum_{k=0}^\infty \exp\left(-D^{2k}\;\frac{a^2}{D}+k\frac{a^2}{2D}\right)
\\
&\le v(R)\sum_{k=0}^\infty\exp \left(-\frac{(k+1)}{2D}a^2\right)\le \frac{v(R)}{e^{a^2/2D}-1}=\frac{v(a\sqrt{t})}{e^{a^2/2D}-1}. 
\end{align*}

Combining this with \eqref{Holder},  we conclude from Lemma \ref{upb-E_D} that for some fixed large constant $D$, there exists some $C=C(D)>0$, such that for any $a$ satisfying \eqref{condition-constant-a}, 
\begin{equation}
\left(\int_{E\setminus B(z, R)} p(t/2, z, x)\mu(dx)\right)^2 \le \frac{C}{v(\sqrt{t})}\cdot \frac{v(a\sqrt{t})}{e^{a^2/2D}-1} \le \frac{C\cdot A^{[\log_2 a]+1}}{e^{a^2/2D}-1}, \quad \text{for all }t\in (0, T).
\end{equation}
The rightmost term above can be made less than $1/4$ (indeed, arbitrarily small) by selecting $a$ sufficiently large in \eqref{condition-constant-a}. Therefore, for such a constant $a$ and $R=a\sqrt{t}$,  it holds for some $c_1>0$ that
\begin{align*}
p(t, z,z)&\ge \frac{1}{v(R)}\left(1- \int_{E\setminus \Omega} p(t/2, z, x)\mu(dx)\right)^2
\\
&\ge \frac{1}{v(R)}\left(1-\sqrt{1/4}\right)^2
\\
&\ge \frac{1}{2v(a\sqrt{t})} \ge \frac{c_1}{v(\sqrt{t})} \qquad \text{on }t\in (0, T),
\end{align*}
where the last ``$\ge$" is again due to the doubling property of $v(\cdot)$, since $a$ has been fixed. This completes the proof. 

\end{proof}


\vskip 0.3truein

\noindent {\bf Shuwen Lou}

\smallskip \noindent
Department of Mathematics, Statistics, and Computer Science,

\noindent
University of Illinois at Chicago,
Chicago, IL 60607, USA

\noindent
E-mail:  \texttt{slou@uic.edu}

\end{document}